\documentclass{amsart}
\usepackage{color}
\def\rojo{} \def\red{} \def\green{} \def\blue{}
\def\azuloso{}
\def\colorado{}

\usepackage[utf8]{inputenc}
\usepackage[utf8]{inputenc}
\usepackage[T1]{fontenc}
\usepackage[all]{xy}
\usepackage{lipsum}
\usepackage{url}
\usepackage{stackrel}

\usepackage{amsmath,amsthm,amssymb}
\usepackage{tikz}
\usepackage{graphicx}

\theoremstyle{definition}
\newtheorem{theorem}{Theorem}[section]
\newtheorem{definition}[theorem]{Definition}
\newtheorem{example}[theorem]{Example}
\newtheorem{proposition}[theorem]{Proposition}
\newtheorem{lemma}[theorem]{Lemma}
\newtheorem{remark}[theorem]{Remark}
\newtheorem{corollary}[theorem]{Corollary}

\numberwithin{equation}{section}

\begin{document}


\vspace{0.5in}

\renewcommand{\bf}{\bfseries}
\renewcommand{\sc}{\scshape}
\vspace{0.5in}

\title[Sectional category and The Fixed Point Property]%
{Sectional category and The Fixed Point Property \\ }

\author{Cesar A. Ipanaque Zapata}
\address{Departamento de Matem\'{a}tica,UNIVERSIDADE DE S\~{A}O PAULO
INSTITUTO DE CI\^{E}NCIAS MATEM\'{A}TICAS E DE COMPUTA\c{C}\~{A}O -
USP , Avenida Trabalhador S\~{a}o-carlense, 400 - Centro CEP:
13566-590 - S\~{a}o Carlos - SP, Brasil}
\curraddr{Departamento de Matem\'{a}ticas, CENTRO DE INVESTIGACI\'{O}N Y DE ESTUDIOS AVANZADOS DEL I. P. N.
Av. Instituto Polit\'{e}cnico Nacional n\'{u}mero 2508,
San Pedro Zacatenco, Mexico City 07000, M\'{e}xico}
\email{cesarzapata@usp.br}

\author{Jes\'{u}s Gonz\'{a}lez}
\address{Departamento de Matem\'{a}ticas, CENTRO DE INVESTIGACI\'{O}N Y DE ESTUDIOS AVANZADOS DEL I. P. N.
Av. Instituto Polit\'{e}cnico Nacional n\'{u}mero 2508,
San Pedro Zacatenco, Mexico City 07000, M\'{e}xico}
\email{jesus@math.cinvestav.mx}

\subjclass[2010]{Primary 55M20, 55R80, 55M30; Secondary 68T40}                                    %

\keywords{Fixed point property, Configuration spaces, Sectional category, Motion planning problem}
\thanks {The first author would like to thank grant\#2018/23678-6, S\~{a}o Paulo Research Foundation (FAPESP) for financial support.}

\begin{abstract} For a Hausdorff space $X$, we exhibit an unexpected connection between the sectional number of the Fadell-Neuwirth fibration $\pi_{\red{2},1}^X:F(X,2)\to X$, and the fixed point property (FPP) for self-maps on $X$. Explicitly, we demonstrate that a space $X$ has the FPP if and only if 2 is the minimal cardinality of open covers $\{U_i\}$ of $X$ such that each $U_i$ admits a continuous  local  section  for $\pi_{\red{2},1}^X$. This characterization connects a standard problem in fixed point theory to current research trends in topological robotics. 
\end{abstract}

\maketitle


\section{Introduction, \blue{outline and main results}}
\medskip\green{A topological theory of motion planning was initiated \rojo{in~\cite{farber2003topological}. As a result, Farber's topological complexity of the space of states of an autonomous agent and, more generally, the sectional number of a map} are numerical invariants \rojo{appearing naturally in the emerging field of topological robotics} (see \cite{pavesic} or \cite{pavesic2019}).}

\medskip\green{Let $X$ be a topological space and $k\geq 1$. The ordered configuration space of $k$ distinct points on $X$ (see \cite{fadell1962configuration}) is the topological space \[F(X,k)=\{(x_1,\ldots,x_k)\in X^k\mid ~~x_i\neq x_j\text{   whenever } i\neq j \},\] topologised as a subspace of the Cartesian power $X^k$. For $k\geq r\geq 1,$ there is a natural projection $\pi_{k,r}^X\colon F(X,k) \to F(X,r)$ given by $\pi_{k,r}^X(x_1,\ldots,x_r,\ldots,x_k)=(x_1,\ldots,x_r)$.}

\medskip \green{The study of sectional number and topological complexity for the map $\pi_{k,r}^X$ is still non-existent and, in fact, this work takes a first step in this direction. Several examples are presented to
illustrate \rojo{the} result \rojo{arising in this field.}}

\medskip \rojo{In more detail,} a topological space $X$ has  \textit{the fixed point property} (FPP) if, for every continuous self-map $f$ of $X$, there is a  point $x$ of $X$ such that $f(x)=x$. \blue{\rojo{We} address the natural question of whether (and how) the FPP can be characterized in the category of Hausdorff spaces and continuous maps.} Such characterizations are known in smaller, more restrictive categories. \blue{For} instance, Fadell proved in 1969 (see 
\cite{fadell1970} for references) that, in the category of connected compact metric ANRs:
\begin{itemize}
    \item If $X$ is a Wecken space, \blue{then} $X$ has the FPP if and only if $N(f)\neq 0$ for every self-map $f:X\to X$.
    \item If $X$ is a Wecken space satisfying the Jiang condition, $J(X)=\pi_1(X)$, then $X$ has the FPP if and only if $L(f)\neq 0$ for every self-map $f:X\to X$.
\end{itemize}

In this work we characterize the FPP \blue{within the category of Hausdorff spaces, and in terms of sectional number}. \green{Indeed, we demonstrate that a space $X$ has the FPP if and only if the sectional number $sec\hspace{.1mm}(\pi_{2,1}^X)$ \rojo{equals 2} (Theorem \ref{characterizacao-ppf}). \rojo{As a result,} we give an alternative proof of the fact that the real projective plane \rojo{has} the FPP (Example \ref{rp2}).}
 
\medskip
As shown in Section \ref{kr-robot}, a particularly interesting feature of our characterization comes from its connection to current research trends in topological robotics.

\medskip\green{On the other hand, the study of the Nielsen root number and the minimal root number for the map $\pi_{k,r}^X$ is still non-existent. This problem belongs to \rojo{the so-called \emph{unstable case} in} the general problem \rojo{of} coincidence theory (see \rojo{\cite[Section~7]{goncalves2005}}). \rojo{We} provide conditions in terms of the minimal root number of $\pi_{2,1}^X$ \rojo{for} $X$ \rojo{to} have the FPP (Proposition \rojo{\ref{conditions}}). \rojo{In addition, we} prove that the Nielsen root number $NR(\pi_{k,r}^X,a)$ is at most one (Proposition \ref{nrn-pi}).}

\medskip \green{The paper is organized as follows: In Section \ref{secrt}, we recall the notions of minimal root number $MR[f,a]$ and the Nielsen root number $NR(f,a)$. In Section \ref{sn}, we recall the notion of Schwarz genus, standard sectional number and basic results about these numerical invariants. \rojo{Our goal is to} study the sectional number for the projection map $\pi_{k,r}^X$. In particular, we demonstrate that a space $X$ has the FPP if and only if the sectional number $sec\hspace{.1mm}(\pi_{2,1}^X)$ \rojo{equals 2} (Theorem \ref{characterizacao-ppf}). In Section \ref{tc-map}, we recall the notion of topological complexity for a map and basic results about these numerical invariant. As applications of our results, in Section \ref{kr-robot}, we study a particular problem in robotics. }

\medskip\colorado{The authors of this paper deeply thank the referee for very valuable comments and timely corrections on previous versions of the work.}



\section{Root theory}\label{secrt}

In this section we give a brief exposition of standard mathematical topics in Root theory: the minimal root number and the Nielsen root number $NR(f,a)$. Our exposition is by no means complete, as we limit our attention to concepts that appear in geometrical and topological questions. More technical details can be found in standard works on root theory, like \cite{brooks1970number} or \cite{brown1999middle}.

Let $f:X\to Y$ be a continuous map between topological spaces, and fix $a\in Y$. A point $x\in X$ such that $f(x)=a$ is called a \textit{root} of $f$ at $a$. 

In Nielsen root theory, by analogy with Nielsen fixed-point theory, the roots of $f$ at $a$ are grouped into Nielsen classes, a notion of essentiality is defined, and the Nielsen root number $NR(f,a)$ is defined to be the number of essential root classes. The Nielsen root number is a homotopy invariant and measures the size of the root set in the sense that  $$NR(f,a)\leq MR[f,a]:=\min\{\mid g^{-1}(a)\mid\hspace{.2mm}\colon~~g\simeq f\}.$$ The number $MR[f,a]$ is called \textit{the minimal root number} for $f$ at $a$. A classical result of Wecken states that $NR(f,a)$ is in fact a
sharp lower bound in the homotopy class of $f$ for many spaces, in particular, for compact manifolds of dimension at least $3$. Thus, in \blue{such cases}, the vanishing of $NR(f,a)$ is sufficient to deform a map $f$ to be \blue{root-free}. Among the central problems in Nielsen root theory (or the theory of root classes) are:
\begin{itemize}
    \item the computation of $NR(f,a)$,
    \item the realization of $NR(f,a)$, i.e., deciding when \blue{the equality} $NR(f,a)=MR[f,a]$ holds.
\end{itemize}

\subsection{The Nielsen root number $NR(f,a)$} We recall from \cite{brooks1970number}
the Nielsen root number $NR(f,a)$. Let $f:X\to Y$ be a continuous map 
between path-connected topological spaces, 
and choose a point  $a\in Y$.

Assume that the set of roots $f^{-1}(a)$ is non empty. Two such roots $x_0$ and $x_1$ are \textit{equivalent} if there is a path $\alpha:[0,1]\to X$ from $x_0$ to $x_1$ such that the loop $f\circ\alpha$ represents the trivial element in $\pi_1(Y,a)$. This is indeed an equivalence relation, and an equivalence class is called a \textit{root class}.     

Suppose $H:X\times [0,1]\to Y$ is a homotopy. Then a root $x_0\in H_0^{-1}(a)$ is said to be \textit{$H$-related} to a root $x_1\in H_1^{-1}(a)$ if and only if there is a path $\alpha:[0,1]\to X$ from $x_0$ to $x_1$ such that the loop $\beta:[0,1]\to Y,~\beta(t)=H(\alpha(t),t)$ represents the trivial element in $\pi_1(Y,a)$.

Note that a root $x_0$ of $f:X\to Y$ is equivalent to another root $x_1$ if and only if $x_0$ is related to $x_1$ by the constant homotopy at $f$.

A root $x_0\in f^{-1}(a)$ is said to be \textit{essential} if and only if for any homotopy $H:X\times [0,1]\to Y$ beginning at $f$, there is a root $x_1\in H_1^{-1}(a)$ to which $x_0$ is $H$-related. If one root in a root class is essential, then all other roots in that root class are essential \blue{too}, and we say that the root class itself is \text{essential}. The number of essential root classes is called the \textit{Nielsen number} of $(f,a)$ and is denoted by $NR(f,a)$. The number $NR(f,a)$ is a lower bound for the number of solutions of $f(x)=a$. If $f^\prime$ is homotopic to $f$ then $NR(f,a)=NR(f^\prime,a)$. Furthermore, $NR(f,a)\leq MR[f,a]$.

The order of the cokernel of the fundamental group homomorphism $f_\#:\pi_1(X)\to \pi_1(Y)$ is denoted by $R(f)$, that is, \[R(f)=\left\lvert \dfrac{\pi_1(Y)}{f_\#(\pi_1(X))}\right\rvert;\] it depends only on the homotopy class of $f$. There are always at most $R(f)$ root classes of $f(x)=a$, in particular, $R(f)\geq NR(f,a)$. 

\begin{example}
If $f_\#:\pi_1(X)\to \pi_1(Y)$ is an epimorphism, $ NR(f,a)\leq 1$. In particular, if $Y$ is simply connected, then $ NR(f,a)\leq 1$.
\end{example}


\section{\red{Sectional number}}\label{sn}
\green{In this section we recall the notion of Schwarz genus \rojo{together with} basic results \rojo{from \cite{schwarz1958genus}} about this numerical invariant . Note that the notion of genus in Schwarz's paper \cite{schwarz1958genus} is \rojo{given} for a fibration. We shall follow the terminology in \cite{pavesic2019} and  refer to this notion as the Schwarz genus of a \colorado{continuous map}. Also, we recall from \cite{pavesic2019} the notion of standard sectional number.}

Let $p:E\to B$ be a \colorado{continuous map}.  A \textit{(homotopy) cross-section} or \textit{section} of $p$ is a (homotopy) right inverse of $p$, i.e., a map $s:B\to E$, such that $p\circ s = 1_B$ ($p\circ s \simeq 1_B$). Moreover, given a subspace $A\subset B$, a \textit{(homotopy) local section} of $p$ over $A$ is a (homotopy) section of the restriction map $p_|:p^{-1}(A)\to A$, i.e., a map $s:A\to E$, such that $p\circ s$ is (homotopic to) the inclusion $A\hookrightarrow B$.

We recall the following definitions.
\begin{definition}
\begin{enumerate}
    \item The (standard) \textit{sectional number} of a \colorado{continuous map} $p\colon E\to B$, $sec\hspace{.1mm}(p)$, is the minimal \blue{cardinality of} open \blue{covers} of $B$, such that each element \blue{of the cover} admits  a  continuous  local  section to $p$. 
    \item The \textit{sectional category} of $p$, also called Schwarz genus of $p$, and denoted by $secat(p),$ is the minimal \blue{cardinality of open covers of $B$, such that each element of the cover admits  a  continuous  homotopy local  section to $p$.}
\end{enumerate}
\end{definition}

\azuloso{Note that $p$ is surjective whenever $sec\hspace{.1mm}(p)<\infty$. The corresponding assertion for $secat(p)$ may fail.}

\begin{remark}\label{secat-sec}
We have $secat(p)\leq sec\hspace{.1mm}(p)$. Furthermore, if $p$ is a fibration then $sec\hspace{.1mm}(p) = secat(p)$.
\end{remark}

\begin{lemma}\label{prop-sectional-category}\cite{schwarz1958genus}
Let $p:E\to B$ be a continuous map and $R$ be a commutative ring with unit. If there exist cohomology classes $\alpha_1,\ldots,\alpha_k\in H^\ast(B;R)$ with $p^\ast(\alpha_1)=\cdots=p^\ast(\alpha_k)=0$ and $\alpha_1\cup\cdots\cup \alpha_k\neq 0$, then $sec\hspace{.1mm}(p)\geq k+1$.
\end{lemma}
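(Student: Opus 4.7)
The plan is to argue by contradiction using the relative cup product and the long exact sequence of a pair, which is the standard strategy for such lower bounds on sectional category/genus. Assume, for contradiction, that $\sec(p)\leq k$. Then there is an open cover $B=U_1\cup\cdots\cup U_k$ together with continuous local sections $s_i\colon U_i\to E$ of $p$, so that $p\circ s_i=\iota_i$, where $\iota_i\colon U_i\hookrightarrow B$ is the inclusion.

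The first key step is to observe that each $\alpha_i$ restricts to zero on the corresponding $U_i$. Indeed,
\[
\iota_i^{\ast}(\alpha_i)=(p\circ s_i)^{\ast}(\alpha_i)=s_i^{\ast}\bigl(p^{\ast}(\alpha_i)\bigr)=s_i^{\ast}(0)=0.
\]
The long exact sequence of the pair $(B,U_i)$ in cohomology with coefficients in $R$ then produces a relative class $\widetilde{\alpha}_i\in H^{\ast}(B,U_i;R)$ whose image under the natural restriction $j_i^{\ast}\colon H^{\ast}(B,U_i;R)\to H^{\ast}(B;R)$ equals $\alpha_i$.

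The second key step is to invoke naturality of the relative cup product, which gives pairings
\[
H^{\ast}(B,U_i;R)\otimes H^{\ast}(B,U_j;R)\longrightarrow H^{\ast}(B,U_i\cup U_j;R),
\]
compatible with restriction to absolute cohomology. Iterating yields a class
\[
\widetilde{\alpha}_1\cup\cdots\cup\widetilde{\alpha}_k\in H^{\ast}(B,\,U_1\cup\cdots\cup U_k;R)=H^{\ast}(B,B;R)=0,
\]
whose image under $H^{\ast}(B,B;R)\to H^{\ast}(B;R)$ is precisely $\alpha_1\cup\cdots\cup\alpha_k$. Hence $\alpha_1\cup\cdots\cup\alpha_k=0$, contradicting the hypothesis and forcing $\sec(p)\geq k+1$.

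The argument is essentially routine once the relative cup product is in hand; the main (mild) obstacle is justifying the existence of the relative cup product and the naturality needed to assemble the iterated cup into $H^{\ast}(B,U_1\cup\cdots\cup U_k;R)$ in the purely topological setting (no CW hypothesis on $B$). This is standard in singular cohomology: the relative cup product is defined on cochains, and the formula $c^p(B,U)\otimes c^q(B,V)\to c^{p+q}(B,U\cup V)$ (where one restricts to cochains vanishing on small simplices in $U$ and $V$) passes to cohomology by the usual subdivision/small-chains argument. With that in place, the rest of the proof is formal.
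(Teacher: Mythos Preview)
Your argument is correct and is exactly the classical Schwarz argument: lift each $\alpha_i$ to a relative class in $H^\ast(B,U_i;R)$ using the local section, then use the relative cup product to land in $H^\ast(B,B;R)=0$. Note, however, that the paper does not supply its own proof of this lemma; it is simply quoted from \cite{schwarz1958genus}, so there is nothing in the paper to compare against beyond the citation. Your write-up thus fills in the omitted justification rather than paralleling or diverging from an argument in the text.
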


\rojo{A few observations worth keeping in mind in what follows are:}
\begin{itemize}
\item \rojo{If} $B$ is path-connected (this case will appear in our work), we have that $\alpha\in H^\ast(B;R)$, $\alpha\neq 0$ with $p^\ast(\alpha)=0$ implies $\alpha\in \widetilde{H}^\ast(B;R)$.
\item \rojo{If} $p:E\to B$ \rojo{is} a continuous map \rojo{and} $p_\ast:H_\ast(E;R)\to H_\ast(B;R)$ or $p_\#:\pi_\ast(E)\to \pi_\ast(B)$ are not su\red{r}jective  then $sec\hspace{.1mm}(p)\geq 2$.
\item Let $p:E\to B$ be a continuous map. If $p$ has a section $s:B\to E$, then $p\circ s=1_B$ and $s^\ast\circ p^\ast=1_{H^\ast(B;R)}$. In particular, $p^\ast:H^\ast(B;R)\to H^\ast(E;R)$ is a monomorphism.
\end{itemize}

The following statement is well-known.

\begin{lemma}\label{pullback}\cite{schwarz1958genus}
Let $p:E\to B$ be a continuous map. If the following square
\begin{eqnarray*}
\xymatrix{ E^\prime \ar[r]^{\,\,} \ar[d]_{p^\prime} & E \ar[d]^{p} & \\
       B^\prime  \ar[r]_{\,\, f} &  B &}
\end{eqnarray*}
is a pullback. Then $sec\hspace{.1mm}(p^\prime)\leq sec\hspace{.1mm}(p)$.
\end{lemma}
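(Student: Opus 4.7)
The plan is to push sections of $p$ forward to sections of $p'$ using the universal property of the pullback, and then transfer the cover from $B$ to $B'$ via $f^{-1}$.

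First I would dispose of the trivial case: if $sec\hspace{.1mm}(p)=\infty$, there is nothing to prove. So assume $sec\hspace{.1mm}(p)=k<\infty$ and pick an open cover $\{U_1,\ldots,U_k\}$ of $B$ such that each restriction $p_{|}\colon p^{-1}(U_i)\to U_i$ admits a continuous section $s_i\colon U_i\to E$. Since $f\colon B'\to B$ is continuous, the sets $V_i:=f^{-1}(U_i)$ form an open cover of $B'$. The goal is then to produce, for each $i$, a continuous section $s_i'\colon V_i\to E'$ of $p'_{|}\colon(p')^{-1}(V_i)\to V_i$.

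For each $i$, consider the composition $s_i\circ f_{|}\colon V_i\to E$, where $f_{|}$ denotes the restriction of $f$ to $V_i$. By the defining property of the sections we have
\begin{equation*}
p\circ(s_i\circ f_{|})=(p\circ s_i)\circ f_{|}=\mathrm{incl}_{U_i}\circ f_{|}=f_{|},
\end{equation*}
so the pair $\bigl(\mathrm{incl}_{V_i},\,s_i\circ f_{|}\bigr)$ of maps $V_i\to B'$ and $V_i\to E$ satisfies the compatibility condition $f\circ\mathrm{incl}_{V_i}=p\circ(s_i\circ f_{|})$ required by the pullback square. Hence the universal property of the pullback yields a unique continuous map $s_i'\colon V_i\to E'$ whose composition with $p'$ is $\mathrm{incl}_{V_i}$ and whose composition with the top horizontal map $E'\to E$ is $s_i\circ f_{|}$. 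In particular $p'\circ s_i'=\mathrm{incl}_{V_i}$, so $s_i'$ is a continuous local section of $p'$ over $V_i$.

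Therefore $\{V_1,\ldots,V_k\}$ is an open cover of $B'$ of cardinality $k$ each of whose members admits a continuous local section for $p'$, which gives $sec\hspace{.1mm}(p')\leq k=sec\hspace{.1mm}(p)$. There is no real obstacle here beyond carefully invoking the universal property of the pullback; the one point worth flagging is that the statement uses open covers, so one must verify that the pulled-back sets $f^{-1}(U_i)$ are open (immediate from continuity of $f$) and that the sections produced are defined on the whole of $V_i$ (which is exactly what the universal property guarantees).
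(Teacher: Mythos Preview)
Your argument is correct and is the standard one. Note that the paper does not actually supply a proof of this lemma; it is stated as well-known with a citation to Schwarz, so there is no in-paper proof to compare against.
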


\colorado{We recall the pathspace construction from \cite[pg. 407]{hatcheralgebraic}. For a continuous map $f:X\to Y$, consider the space 
\begin{equation*}
E_f=\{(x,\gamma)\in X\times PY\mid~\gamma(0)=f(x)\}.
\end{equation*} The map \begin{equation*}
\rho_f:E_f\to Y,~(x,\gamma)\mapsto \rho_f(x,\gamma)=\gamma(1),
\end{equation*} is a fibration. \azuloso{Further,} the projection over the first coordinate $E_f\to X,~(x,\gamma)\mapsto x$ is a homotopy equivalence with homotopy inverse $c:X\to E_f$ given by $x\mapsto (x,\gamma_{f(x)})$, where $\gamma_{f(x)}$ is the constant path at $f(x)$. \azuloso{This factors} an arbitrary map $f:X\to Y$ as the composition $X\stackrel{c}{\to} E_f\stackrel{\rho_f}{\to} Y$ of a homotopy equivalence and a fibration.}

\azuloso{For convenience, we record the following standard properties:}
\begin{proposition}\label{secat-pf-equal-secat-f}
\begin{enumerate}
    \item \azuloso{For a continuous map $f:X\to Y$, 
${secat}(\rho_f)= {secat}(f).$}
\item \colorado{If $f\simeq g$, then \azuloso{${secat}(f)={secat}(g).$}}
\end{enumerate}
\end{proposition}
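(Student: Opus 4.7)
The plan is to dispatch part (2) first via a purely formal homotopy-invariance argument, and then use the factorization $f=\rho_f\circ c$ together with the fact that $c$ is a homotopy equivalence to deduce part (1).

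For part (2), assume $f\simeq g$ and suppose $\{U_i\}_{i=1}^k$ is an open cover of $Y$ together with maps $s_i:U_i\to X$ satisfying $f\circ s_i\simeq \iota_i$, where $\iota_i:U_i\hookrightarrow Y$ denotes the inclusion. Pre-composing the given homotopy $f\simeq g$ with $s_i$ yields $g\circ s_i\simeq f\circ s_i\simeq \iota_i$, so the same open cover witnesses $secat(g)\leq k$. This proves $secat(g)\leq secat(f)$, and the reverse inequality follows by symmetry.

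For part (1), recall that $\rho_f\circ c=f$ and that $c$ admits the homotopy inverse $\pi:E_f\to X$, $(x,\gamma)\mapsto x$, with $c\circ\pi\simeq 1_{E_f}$. To establish $secat(\rho_f)\leq secat(f)$, from a homotopy local section $s_i:U_i\to X$ of $f$ I form the composite $c\circ s_i:U_i\to E_f$, which satisfies $\rho_f\circ(c\circ s_i)=f\circ s_i\simeq \iota_i$ and is therefore a homotopy local section of $\rho_f$. Conversely, from a homotopy local section $\sigma_i:U_i\to E_f$ of $\rho_f$ I set $\widetilde{s}_i:=\pi\circ\sigma_i$; using $c\circ\pi\simeq 1_{E_f}$ one obtains
$$f\circ\widetilde{s}_i \;=\; \rho_f\circ c\circ\pi\circ\sigma_i \;\simeq\; \rho_f\circ\sigma_i \;\simeq\; \iota_i,$$
so $\widetilde{s}_i$ is a homotopy local section of $f$. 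Hence $secat(f)\leq secat(\rho_f)$.

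The argument is essentially formal manipulation of homotopies, so no serious obstacle is expected. The only point requiring mild attention is that the two constructions in part (1) are not strict inverses but only homotopy inverses, so the two inequalities must be verified independently rather than packaged as a single bijection between the collections of local sections; this is the reason part (2) plays a facilitating role, allowing us to freely replace $f\circ s_i$ by a homotopic map when checking the defining condition of a homotopy local section.
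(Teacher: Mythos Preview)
Your argument is correct. The paper does not actually prove this proposition: it is introduced with the phrase ``for convenience, we record the following standard properties'' and is stated without proof, so there is no original argument to compare against. Your verification is the natural one---using that $\rho_f\circ c=f$ on the nose and that $c$ is a homotopy equivalence to transport homotopy local sections in both directions, together with the elementary observation for part~(2) that the defining condition $f\circ s_i\simeq\iota_i$ is insensitive to replacing $f$ by a homotopic map---and it fills in the omitted details cleanly.
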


Next, we recall the notion of LS category which, in our setting, is one greater than that given in \cite{cornea2003lusternik}. For example, the category of a contractible space is one.

\begin{definition}
The \textit{Lusternik-Schnirelmann category} (LS category) or category of a topological space $X$, denoted cat$(X)$, is the least integer $m$ such that $X$ can be covered by $m$ open sets, all of which are contractible within $X$. 
\end{definition}

We have $\text{cat}(X)=1$ iff $X$ is contractible. The LS category is a homotopy invariant, i.e., if $X$ is homotopy equivalent to $Y$ (which we shall denote by $X\simeq Y$), then $\text{cat}(X)=\text{cat}(Y)$.  

\medskip \colorado{The following lemma \azuloso{generalizes} Proposition 9.14 from \cite{cornea2003lusternik}.}

 \begin{lemma}\label{prop-secat-map}
 \colorado{Let $p:E\to B$ be a continuous map.
 \begin{enumerate}
     \item If $p$ is a fibration, then $sec\hspace{.1mm}(p)\leq \azuloso{\text{cat}}(B)$. In particular, for any continuous map $f:X\to Y$, we have $secat(f)\leq \text{cat}(Y)$.
     \item If $p$ is nulhomotopic, then $secat(p)=\text{cat}(B).$
 \end{enumerate}}
\end{lemma}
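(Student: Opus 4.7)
The plan is to handle the two parts separately, with part (1) being the core lifting argument and part (2) a reduction to the case of a constant map.

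\textbf{Part (1).} Assume $m:=\text{cat}(B)$ is finite (otherwise the inequality is vacuous) and fix an open cover $U_1,\ldots,U_m$ of $B$ in which each inclusion $\iota_i\colon U_i\hookrightarrow B$ is nulhomotopic, via a homotopy $H_i\colon U_i\times I\to B$ going from a constant map at some $b_i\in B$ to $\iota_i$. For each $i$, I would pick a point $e_i\in p^{-1}(b_i)$ and lift the constant map $U_i\to B$ at $b_i$ to the constant map $U_i\to E$ at $e_i$. By the homotopy lifting property of the fibration $p$, this lift extends to a homotopy $\widetilde{H}_i\colon U_i\times I\to E$ covering $H_i$, and the terminal map $s_i:=\widetilde{H}_i(\cdot,1)$ satisfies $p\circ s_i=\iota_i$; thus $s_i$ is a genuine local section of $p$ over $U_i$, yielding $sec\hspace{.1mm}(p)\le m$. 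For the ``in particular'' clause, given an arbitrary $f\colon X\to Y$ I would apply the pathspace factorization $f=\rho_f\circ c$ and combine Proposition \ref{secat-pf-equal-secat-f}(1) with Remark \ref{secat-sec} to obtain $secat(f)=secat(\rho_f)=sec\hspace{.1mm}(\rho_f)$; the previous paragraph then bounds this by $\text{cat}(Y)$ since $\rho_f$ is a fibration.

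\textbf{Part (2).} Since $p$ is nulhomotopic, $p\simeq c_{b_0}$ for some $b_0\in B$, and Proposition \ref{secat-pf-equal-secat-f}(2) reduces the claim to showing $secat(c_{b_0})=\text{cat}(B)$. For the inequality $secat(c_{b_0})\le \text{cat}(B)$, I would take a cover $\{U_i\}$ witnessing $\text{cat}(B)$, fix any $e_0\in E$, and let $s_i\colon U_i\to E$ be the constant map at $e_0$; then $c_{b_0}\circ s_i$ is itself a constant, which is homotopic to $\iota_i$ because $U_i$ is contractible in $B$, so $s_i$ is a homotopy local section. Conversely, given any cover $\{V_j\}$ with homotopy local sections $s_j\colon V_j\to E$ of $c_{b_0}$, the relation $\iota_j\simeq c_{b_0}\circ s_j$ shows that $\iota_j$ is nulhomotopic, so each $V_j$ is contractible in $B$ and $\{V_j\}$ witnesses $\text{cat}(B)\le secat(c_{b_0})$.

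The main obstacle is the bookkeeping in part (1): one needs $p^{-1}(b_i)$ to be nonempty in order to select the base-points $e_i$, which in turn forces $p$ to be surjective onto the path-components meeting the $U_i$. This is automatic once $sec\hspace{.1mm}(p)<\infty$, and in the usual setting of a fibration over a path-connected base with nonempty total space it is an immediate consequence of path lifting. Once this point is dispatched, the rest of the argument — the pathspace reduction and the two-sided inequality for a constant map — is purely formal manipulation using the already-established properties of $secat$.
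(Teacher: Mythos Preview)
Your proof is correct and follows essentially the same route as the paper. The paper is terser: for the first half of part~(1) it simply cites \cite[Proposition~9.14]{cornea2003lusternik} (whose proof is precisely your lifting argument), and for part~(2) it just asserts the inequality $\text{cat}(B)\le secat(\rho_p)$ for nulhomotopic $p$ without spelling out the reduction to a constant map; your argument supplies exactly these omitted details.
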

\begin{proof}
 \colorado{The first part of item $(1)$ was proved in \cite[Proposition 9.14]{cornea2003lusternik}. For the second part of item $2$, by Proposition~\ref{secat-pf-equal-secat-f}, we have $secat(f)=secat(\rho_f)$ and thus $secat(f)\leq \text{cat}(Y)$.}
 
\colorado{Item $2$ follows easily, because the inequality $\text{cat}(B)\leq secat(\rho_p)$ holds for any nulhomotopic map $p:E\to B$.} 
\end{proof}

\subsection{Configuration spaces}\label{secconfespa}

Let $X$ be a topological space and $k\geq 1$. The \textit{ordered configuration space} of $k$ distinct points on $X$ (see \cite{fadell1962configuration}) is the topological space \[F(X,k)=\{(x_1,\ldots,x_k)\in X^k\mid ~~x_i\neq x_j\text{   whenever } i\neq j \},\] topologised as a subspace of the Cartesian power $X^k$.

For $k\geq r\geq 1,$ there is a natural projection \blue{$\pi_{k,r}^X\colon F(X,k) \to F(X,r)$ given by $\pi_{k,r}^X(x_1,\ldots,x_r,\ldots,x_k)=(x_1,\ldots,x_r)$.}


\begin{lemma}[Fadell-Neuwirth fibration \cite{fadell1962configuration}] \label{TFN} Let $M$ be a connected $m-$dimensional topological manifold without boundary, where $m\geq 2$. \blue{For $k> r\geq 1$, the map} $\pi_{k,r}^M:F(M,k)\to F(M,r)$  is a locally trivial bundle with fiber $F(M-Q_r, k-r)$, \red{where $Q_r\subset M$ is a finite subset with $r$ elements}. In particular, $\pi_{k,r}^M$ is a fibration.
\end{lemma}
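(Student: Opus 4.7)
The plan is to identify the fiber and then exhibit a local product structure. The fiber identification is immediate: for $q = (x_1,\ldots,x_r) \in F(M,r)$, the preimage $(\pi_{k,r}^M)^{-1}(q)$ consists of tuples $(x_1,\ldots,x_r,y_1,\ldots,y_{k-r})$ with the $y_j$ mutually distinct and distinct from each $x_i$, so projecting onto the last $k-r$ coordinates yields a homeomorphism onto $F(M \setminus \{x_1,\ldots,x_r\}, k-r)$, matching the claimed fiber $F(M - Q_r, k-r)$.

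The real content lies in local triviality. Fix a basepoint $q_0 = (p_1,\ldots,p_r) \in F(M,r)$, and set $Q_r = \{p_1,\ldots,p_r\}$. Using that $M$ is a topological manifold, I would first choose pairwise disjoint open charts $\varphi_i : V_i \to \mathbb{R}^m$ around each $p_i$, with $\varphi_i(p_i) = 0$. The key step is to construct a continuous family of self-homeomorphisms $\{h_q : M \to M\}_{q \in U}$, parametrized by $q = (x_1,\ldots,x_r)$ in a neighborhood $U$ of $q_0$ in $F(M,r)$, satisfying $h_{q_0} = \mathrm{id}$, $h_q(p_i) = x_i$ for each $i$, and $h_q$ supported in $V_1 \cup \cdots \cup V_r$. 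This is standard bump-function machinery: pick a compactly supported $\psi : \mathbb{R}^m \to [0,1]$ equal to $1$ near the origin. For any sufficiently small $v \in \mathbb{R}^m$, the map $z \mapsto z + \psi(z)\,v$ is a self-homeomorphism of $\mathbb{R}^m$ sending $0$ to $v$ and equal to the identity outside a compact set. Conjugating by $\varphi_i$ gives the local piece of $h_q$ on $V_i$, and shrinking $U$ ensures smallness of each $\varphi_i(x_i)$.

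With the family $\{h_q\}$ in hand, I would define the trivialization
\[
\Phi : (\pi_{k,r}^M)^{-1}(U) \longrightarrow U \times F(M - Q_r, k-r)
\]
by $\Phi(x_1,\ldots,x_r,y_1,\ldots,y_{k-r}) = \bigl((x_1,\ldots,x_r),\, h_q^{-1}(y_1),\ldots,h_q^{-1}(y_{k-r})\bigr)$, where $q = (x_1,\ldots,x_r)$. Since $h_q^{-1}(x_i) = p_i$ and the $y_j$ are distinct from every $x_i$, the image indeed lies in $F(M - Q_r, k-r)$. The inverse applies $h_q$ to each coordinate of a point in the fiber factor. Both maps are continuous and commute with projection to $U$, so $\Phi$ is the desired local trivialization.

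The main obstacle is the construction of $\{h_q\}$ varying continuously with $q$. Globally there is no such family in general, but the manifold structure of $M$ reduces the problem to each chart neighborhood, where it amounts to a small translation interpolated by a bump function; the dimension hypothesis $m \geq 2$ is not used at this step, but it is harmless. Once local triviality is established, the final ``in particular'' clause follows from the standard fact that every locally trivial bundle is a Hurewicz fibration.
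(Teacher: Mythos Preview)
The paper does not supply its own proof of this lemma; it is simply quoted from Fadell--Neuwirth \cite{fadell1962configuration}. Your sketch is precisely the standard argument that appears there (and in most textbook treatments): identify the fiber over a point, then build a locally defined isotopy of $M$ moving the base configuration to nearby configurations via compactly supported translations in disjoint coordinate charts, and use it to trivialize the bundle. So in content and method you are aligned with the cited source.

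One point deserves tightening. Your final sentence asserts that ``every locally trivial bundle is a Hurewicz fibration,'' but this is not true without hypotheses on the base: one needs, for instance, that the base be paracompact Hausdorff (Hurewicz's uniformization theorem, or the numerable-cover version due to Dold). Here $F(M,r)$ is an open subset of the manifold $M^r$, hence is itself a (second countable, Hausdorff) manifold and in particular paracompact, so the implication goes through; but you should state this explicitly rather than invoke a false general principle. A smaller remark: your observation that $m\geq 2$ is not used in the local-triviality step is correct; the hypothesis is there so that $M\setminus Q_r$ remains connected and $F(M\setminus Q_r,k-r)$ is path-connected, which matters for later applications in the paper (e.g.\ Proposition~\ref{nrn-pi}) rather than for the bundle structure itself.
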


\red{\blue{The boundary restriction in Lemma~\ref{TFN} is important, for} $\pi_{k,r}^X:F(\blue{M},k)\to F(\blue{}M,r)$ \rojo{might fail to be} a fibration if $\blue{M}$ is a manifold with boundary. \blue{This can be seen} by considering, for example, the manifold $\mathbb{D}^2$, with $k=2$ and $r=1$, \blue{for} the fibre $\mathbb{D}^2-\{(0,0)\}$ is not homotopy equivalent to the fibre $\mathbb{D}^2-\{(1,0)\}$.}

\begin{proposition}\label{nrn-pi}
Let $M$ be a connected $m-$dimensional topological manifold without boundary, where $m\geq 2$. \blue{For $k> r\geq 1$, the projection} $\pi_{k,r}^M:F(M,k)\to F(M,r)$ has Nielsen root number $NR(\pi_{k,r}^M,a)\leq 1$ for any $a\in F(M,r)$. 
\end{proposition}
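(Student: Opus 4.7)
The plan is to reduce the statement to the observation (recorded as the Example at the end of Section~\ref{secrt}) that if the induced fundamental group homomorphism is an epimorphism, then the Nielsen root number is at most one. Concretely, I would show that
\[
(\pi_{k,r}^M)_\#:\pi_1(F(M,k))\to \pi_1(F(M,r))
\]
is surjective, and then invoke that example directly.

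First, I would invoke Lemma~\ref{TFN} to present $\pi_{k,r}^M$ as a locally trivial bundle with fiber $F(M-Q_r,k-r)$, and in particular as a fibration. The associated long exact sequence of homotopy groups contains the portion
\[
\pi_1(F(M-Q_r,k-r))\longrightarrow \pi_1(F(M,k))\stackrel{(\pi_{k,r}^M)_\#}{\longrightarrow}\pi_1(F(M,r))\longrightarrow \pi_0(F(M-Q_r,k-r)),
\]
so surjectivity of $(\pi_{k,r}^M)_\#$ will follow once the fiber is shown to be path-connected.

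Second, I would verify path-connectedness of $F(M-Q_r,k-r)$. Since $M$ is a connected manifold without boundary of dimension $m\geq 2$, the space $M-Q_r$ obtained by removing the finitely many points of $Q_r$ is again a connected manifold without boundary of dimension $m\geq 2$. The standard fact that ordered configuration spaces of a connected manifold of dimension at least two are path-connected (proved by induction on the number of points, using Lemma~\ref{TFN} to reduce to connectedness of a manifold with finitely many points removed) then yields that $F(M-Q_r,k-r)$ is path-connected, and in the same way that both $F(M,k)$ and $F(M,r)$ are path-connected, which is needed for the discussion of Section~\ref{secrt} to apply.

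Combining the two previous steps, $(\pi_{k,r}^M)_\#$ is surjective, and the example at the end of Section~\ref{secrt} gives $NR(\pi_{k,r}^M,a)\leq 1$ for every $a\in F(M,r)$. I do not anticipate a serious obstacle: the only subtle point is the inductive verification that configuration spaces of connected manifolds of dimension $\geq 2$ without boundary are path-connected, which is a well-known application of the Fadell--Neuwirth fibration itself.
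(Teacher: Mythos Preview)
Your proposal is correct and follows essentially the same route as the paper: use the Fadell--Neuwirth fibration and path-connectedness of the fiber $F(M-Q_r,k-r)$ to deduce via the long exact sequence that $(\pi_{k,r}^M)_\#$ is surjective on $\pi_1$, and then conclude $NR\leq 1$ from $R(\pi_{k,r}^M)=1$. Your version is slightly more detailed in justifying the path-connectedness of the fiber and of the total and base spaces, but the argument is the same.
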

\begin{proof}
The map $\pi_{k,r}^M:F(M,k)\to F(M,r)$ is a fibration with fiber $F(M-Q_r, k-r)$. We note that $F(M-Q_r, k-r)$ is path-connected. By the long exact homotopy sequence of the fibration $\pi_{k,r}^M$, we have the induced homomorphism $(\pi_{k,r}^M)_\#:\pi_1F(M,k)\to \pi_1F(M,r)$ is an epimorphism. Then, $R(\pi_{k,r}^M)=1$ and thus the Nielsen root number $NR(\pi_{k,r}^M,a)\leq 1$ for any $a\in F(M,r)$. 
\end{proof}

\rojo{Note that $MR[\pi_{k,1}^X,a]=0$ (in particular $NR(\pi_{k,1}^X,a)=0$) for any contractible space $X$.}

\begin{proposition}\label{secop-pi-k-1}[Key lemma]
 For any $k\geq 2$ and $X$ a Hausdorff space, we have
$sec\hspace{.1mm}(\pi_{k,1}^X)\leq k$.
\end{proposition}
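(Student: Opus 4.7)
The plan is to produce, by direct combinatorial construction, an open cover of $X$ by $k$ sets, each equipped with an explicit continuous local section of $\pi_{k,1}^X$. Implicitly I will assume that $F(X,k)$ is non-empty, i.e.\ that $X$ contains at least $k$ distinct points; otherwise $\pi_{k,1}^X$ fails to be surjective and (by the observation following the definition of $sec\hspace{.1mm}$) one would have $sec\hspace{.1mm}(\pi_{k,1}^X)=\infty$, so the asserted bound can only be meaningful in that regime.

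First I would fix $k$ pairwise distinct points $a_1,\ldots,a_k\in X$. Since $X$ is Hausdorff every finite subset is closed, so for each $i\in\{1,\ldots,k\}$ the set
\[
U_i \;=\; X\setminus\{a_j : j\neq i\}
\]
is open in $X$. The key combinatorial point is that the ``bad sets'' $B_i=\{a_j:j\neq i\}$, each of cardinality $k-1$, have empty common intersection: by distinctness of the $a_j$'s, no single $a_\ell$ lies in every $B_i$ (it is omitted from $B_\ell$), and any point outside $\{a_1,\ldots,a_k\}$ lies in no $B_i$ at all. Hence $\{U_i\}_{i=1}^{k}$ is an open cover of $X$.

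Next I would define, for each $i$, the continuous map $s_i\colon U_i\to F(X,k)$ by
\[
s_i(x)\;=\;\bigl(x,\,a_{j_1},a_{j_2},\ldots,a_{j_{k-1}}\bigr),
\]
where $j_1<j_2<\cdots<j_{k-1}$ is the natural enumeration of $\{1,\ldots,k\}\setminus\{i\}$. For $x\in U_i$ one has $x\neq a_j$ for every $j\neq i$, so all $k$ coordinates of $s_i(x)$ are pairwise distinct; hence $s_i$ genuinely lands in $F(X,k)$. Continuity is immediate since the last $k-1$ coordinates are constant, and by construction $\pi_{k,1}^X\circ s_i$ is the inclusion $U_i\hookrightarrow X$. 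This exhibits $\{U_i,s_i\}_{i=1}^{k}$ as a cover by $k$ open sets with local sections, giving $sec\hspace{.1mm}(\pi_{k,1}^X)\leq k$.

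There is really no hard obstacle here; the entire proof rests on the idea of choosing the bad sets $B_i$ so that their intersection is empty (which forces $|B_i|\geq k-1$ and is the reason the bound comes out to $k$ rather than something smaller). The only delicate point worth flagging is the reliance on Hausdorffness to make finite sets closed, together with the tacit cardinality assumption $|X|\geq k$ needed to pick the $a_i$'s in the first place.
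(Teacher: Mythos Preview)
Your argument is correct and is essentially identical to the paper's proof: the authors also fix a configuration $(p_1,\ldots,p_k)\in F(X,k)$, take $U_i=X\setminus\{p_1,\ldots,\widehat{p_i},\ldots,p_k\}$, and use the sections $s_i(x)=(x,p_1,\ldots,\widehat{p_i},\ldots,p_k)$. Your write-up is in fact slightly more careful, making explicit the tacit assumption $|X|\geq k$ and the verification that the $U_i$ cover $X$.
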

\begin{proof}
Fix an element $(p_1,\ldots,p_k)\in F(X,k)$. For each $i=1,\ldots,k$, set \[U_i:=X-\{p_1,\ldots,p_{i-1},p_{i+1},\ldots,p_k\}\] and \blue{let} $s_i:U_i\longrightarrow F(X,k)$ \blue{be} given by $s_i(x):=(x,p_1,\ldots,p_{i-1},p_{i+1},\ldots,p_k)$. We note that each $U_i$ is open (because $X$ is Hausdorff) and each $s_i$ is a local section of $\pi_{k,1}^X$. Furthermore,, $X=U_1\cup\cdots\cup U_k$. Thus, $sec\hspace{.1mm}(\pi_{k,1}^X)\leq k.$
\end{proof}

\begin{remark}
\colorado{\azuloso{Using} Lemma~\ref{prop-secat-map} \azuloso{we see that,} for any $k\geq2$,
\begin{equation}\label{dosconds}
\mbox{$\pi^X_{k,1}\simeq\star\;\;$ and $\;\;secat\hspace{.1mm}(\pi_{k,1}^X)=1\;\;$ if and only if $\;\;X\simeq\star$.}
\end{equation}
The most appealing situation of \azuloso{(\ref{dosconds})} holds for $k=2$, as in fact $sec\hspace{.1mm}(\pi_{2,1}^X)\in\{1,2\}$, in view of Proposition~\ref{secop-pi-k-1}. Indeed, it would be interesting to know whether there is a space $X$ for which $\pi^X_{2,1}$ is a nulhomotopic fibration having $sec\hspace{.1mm}(\pi_{2,1}^X)=2$. Such a space would have to be a non-contractible co-H-space of topological complexity 2 or 3 (see \azuloso{Proposition~\ref{nul-homotopy-implie-cat2}} and Remark~\ref{aaaaa}) and, more relevantly for the purposes of this paper, would have to satisfy the fixed point property ---see Definition~\ref{defifpp} and Theorem~\ref{characterizacao-ppf} below.}
\end{remark}

\begin{definition}\label{defifpp}
A topological space $X$ has  \textit{the fixed point property} (FPP) if for every continuous self-map $f$ of $X$ there is a  point $x$ of $X$ such that $f(x)=x$.
\end{definition}

\begin{example}
It is well known that the unit disc $D^m=\{x\in\mathbb{R}^m:~\parallel x\parallel\leq 1\}$ has the FPP (The Brouwer's  fixed point theorem). The \red{real, complex and quaternionic} projective spaces, $\mathbb{RP}^{n}, \mathbb{CP}^{n}$ and $\mathbb{HP}^{n}$ have the FPP \red{when $n$ is even} (see \cite{hatcheralgebraic}). For the particular case, $\mathbb{RP}^{2}$, see Example \ref{rp2}.
\end{example}

Note that the map $\pi_{2,1}^X:F(X,2)\to X$ admits a cross-section if and only if there exists a fixed point free self-map $f:X\to X$. Thus, we have the following theorem.
\begin{theorem}\label{characterizacao-ppf}[\blue{Main} theorem]
Let $X$ be a Hausdorff space. The space $X$ has the FPP if and only if $sec\hspace{.1mm}(\pi_{2,1}^X)=2$.
\end{theorem}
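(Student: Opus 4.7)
The plan is to exploit the remark made immediately before the statement: a continuous global section of $\pi_{2,1}^X\colon F(X,2)\to X$ is the same datum as a fixed-point-free self-map of $X$. Combined with the upper bound $sec\hspace{.1mm}(\pi_{2,1}^X)\leq 2$ coming from Proposition~\ref{secop-pi-k-1} (the key lemma with $k=2$), the theorem reduces to distinguishing the two possible values $1$ and $2$.

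More concretely, I would first record the bijective correspondence
\[
\{\text{continuous sections } s\colon X\to F(X,2)\text{ of }\pi_{2,1}^X\}\;\longleftrightarrow\;\{\text{continuous }g\colon X\to X\text{ with }g(x)\neq x\text{ for all }x\in X\},
\]
where $s\leftrightarrow g$ is determined by $s(x)=(x,g(x))$. Continuity of $s$ into the subspace $F(X,2)\subset X\times X$ is equivalent to continuity of its two coordinates; the first coordinate is forced to be $\mathrm{id}_X$, and the image lies in $F(X,2)$ precisely when the second coordinate never coincides with the first. (The Hausdorff hypothesis is not needed for this step; it is used only via Proposition~\ref{secop-pi-k-1} to guarantee the upper bound on $sec\hspace{.1mm}(\pi_{2,1}^X)$.)

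Next I would observe that, for Hausdorff $X$ with at least two points, $\pi_{2,1}^X$ is surjective, so $sec\hspace{.1mm}(\pi_{2,1}^X)$ is a positive integer; Proposition~\ref{secop-pi-k-1} gives $sec\hspace{.1mm}(\pi_{2,1}^X)\leq 2$, hence $sec\hspace{.1mm}(\pi_{2,1}^X)\in\{1,2\}$. By definition, $sec\hspace{.1mm}(\pi_{2,1}^X)=1$ is equivalent to the existence of a global continuous section, which by the correspondence above is equivalent to the existence of a fixed-point-free self-map of $X$, i.e.\ to the failure of the FPP. Contrapositively, $sec\hspace{.1mm}(\pi_{2,1}^X)=2$ is equivalent to the FPP, which is the desired conclusion.

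There is essentially no hard step: the content is the translation between sections and fixed-point-free maps. The only subtlety worth flagging is the degenerate case of a one-point space (where $F(X,2)=\emptyset$ and $sec\hspace{.1mm}(\pi_{2,1}^X)=\infty$ under the usual convention, even though the FPP holds trivially); this is tacitly excluded by working with Hausdorff spaces admitting at least two points, where $\pi_{2,1}^X$ is automatically surjective and the dichotomy $sec\hspace{.1mm}(\pi_{2,1}^X)\in\{1,2\}$ applies.
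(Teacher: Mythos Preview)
Your proposal is correct and follows essentially the same route as the paper: both use the observation (recorded just before the theorem) that a global section of $\pi_{2,1}^X$ corresponds to a fixed-point-free self-map, together with the key lemma (Proposition~\ref{secop-pi-k-1}) to force $sec\hspace{.1mm}(\pi_{2,1}^X)\in\{1,2\}$. Your write-up is in fact slightly more careful than the paper's, since you explicitly flag the degenerate one-point case where $F(X,2)=\varnothing$ and the conclusion fails; the paper's proof tacitly assumes $F(X,2)\neq\varnothing$ (as does the proof of Proposition~\ref{secop-pi-k-1}).
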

\begin{proof}
Suppose \blue{that} $X$ has the FPP, then $sec\hspace{.1mm}(\pi^{\red{X}}_{2,1})\geq 2$ \blue{so,} by the Key Lemma (Proposition \ref{secop-pi-k-1}), $sec\hspace{.1mm}(\pi_{2,1}^X)=2$.  Suppose \blue{now that} $sec\hspace{.1mm}(\pi_{2,1}^X)=2$, \blue{so in} particular $sec\hspace{.1mm}(\pi_{2,1}^X)\neq 1$. Hence, $X$ has the FPP. 
\end{proof}

\begin{example}
No nontrivial topological group $G$ has the FPP. Indeed, the map $s:G\to F(G,2),~g\mapsto (g,g_1g)$ (for some fixed $g_1\neq e\in G$) is a cross-section for $\pi_{2,1}^G:F(G,2)\to G$. The self-map $G\to G,~g\mapsto g_1g$ is fixed point free.
\end{example}

\begin{example}
We recall that the odd-dimensional projective spaces $\mathbb{RP}^{2n+1}$ has not the FPP, because there is a continuous self-map $h:\mathbb{RP}^{2n+1}\to \mathbb{RP}^{2n+1},$ given by the formula $h([x_1:y_1:\cdots:x_{n+1}:y_{n+1}])=[-y_1:x_1:\cdots:-y_{n+1}:x_{n+1}]$, without fixed point. Thus, $sec\hspace{.1mm}(\pi_{2,1}^{\mathbb{RP}^{2n+1}})=1.$

On the other hand, we know that an even-dimensional projective spaces $\mathbb{RP}^{2n}$ has the FPP. Thus, $sec\hspace{.1mm}(\pi_{2,1}^{\mathbb{RP}^{2n}})=2$. Analogous facts hold for complex and quaternionic projective spaces.
\end{example}

\begin{example}
The spheres $S^n$ does not have the FPP, because the antipodal map $A:S^n\to S^n,~x\mapsto -x$ has not fixed points. Thus, $sec\hspace{.1mm}(\pi_{2,1}^{S^n})=1.$
\end{example}

\begin{example}
We know that any closed surface $\Sigma$, except for the projective plane $\Sigma\neq\mathbb{RP}^2$, has not the FPP. Thus, $sec\hspace{.1mm}(\pi_{2,1}^{\Sigma})=1.$
\end{example}

\begin{corollary}\label{suficiente-ppf}
Let $X$ be a Hausdorff space. If there exist $\alpha\in H^\ast(X;R)$ with $\alpha\neq 0$ and $(\pi_{2,1}^X)^\ast(\alpha)=0\in H^\ast(F(X,2);R)$, that is, if the induced homomorphism $(\pi_{2,1}^X)^\ast:H^\ast(X;R)\to H^\ast(F(X,2);R)$ is not injective, then $sec\hspace{.1mm}(\pi_{2,1}^X)= 2$.  In particular, $X$ has the FPP.
\end{corollary}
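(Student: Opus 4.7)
The plan is to combine the cohomological lower bound for sectional number (Lemma~\ref{prop-sectional-category}) with the Key Lemma (Proposition~\ref{secop-pi-k-1}), and then invoke the main theorem (Theorem~\ref{characterizacao-ppf}). Everything needed is already in place, so this corollary should follow essentially by assembling these results.

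First I would observe that the two formulations in the hypothesis are equivalent: the existence of a nonzero $\alpha\in H^\ast(X;R)$ with $(\pi_{2,1}^X)^\ast(\alpha)=0$ is exactly the statement that $(\pi_{2,1}^X)^\ast$ fails to be injective, by the definition of injectivity of a linear map. With such an $\alpha$ in hand, I would apply Lemma~\ref{prop-sectional-category} in the special case $k=1$ to conclude that $sec\hspace{.1mm}(\pi_{2,1}^X)\geq 2$.

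Next, the Key Lemma (Proposition~\ref{secop-pi-k-1}) applied to $k=2$ (using that $X$ is Hausdorff) provides the matching upper bound $sec\hspace{.1mm}(\pi_{2,1}^X)\leq 2$. These two inequalities together force $sec\hspace{.1mm}(\pi_{2,1}^X)=2$. Finally, applying Theorem~\ref{characterizacao-ppf} yields at once that $X$ has the FPP.

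I do not anticipate any real obstacle: the corollary is a formal consequence of three previously established results, and the cohomological hypothesis is precisely tailored to feed into the Schwarz-type lower bound of Lemma~\ref{prop-sectional-category}. The only point worth spelling out explicitly is the translation between the two phrasings (existence of $\alpha$ versus non-injectivity of $(\pi_{2,1}^X)^\ast$), and the use of $k=1$ in Lemma~\ref{prop-sectional-category}, which gives the very first nontrivial lower bound on sectional number.
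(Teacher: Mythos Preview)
Your proposal is correct and follows exactly the paper's own argument: apply Lemma~\ref{prop-sectional-category} with $k=1$ to get $sec(\pi_{2,1}^X)\geq 2$, combine with the Key Lemma to obtain equality, and conclude the FPP via Theorem~\ref{characterizacao-ppf}. The paper's proof is a terse three-line version of precisely what you wrote.
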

\begin{proof}
From Lemma \ref{prop-sectional-category}, $sec\hspace{.1mm}(\pi_{2,1}^X)\geq 1+1=2$. Then, by Proposition \ref{secop-pi-k-1}, $sec\hspace{.1mm}(\pi_{2,1}^X)= 2$. Thus, the result follows from Theorem \ref{characterizacao-ppf}.
\end{proof}

\rojo{The converse of Corollary \ref{suficiente-ppf} is not true. For example, we recall that the unit disc $D^{m}:=\{x\in \mathbb{R}^m\mid~\parallel x\parallel\leq 1\}$ has the FPP (from the Brouwer`s fixed point theorem) and thus $sec\hspace{.1mm}(\pi_{2,1}^{D^{m}})=2.$ However,  $\widetilde{H}^\ast(D^{m};R)=0$.}

\begin{corollary}
Let $X$ be a Hausdorff space. If the induced homomorphisms $(\pi_{2,1}^X)_\ast:H_\ast(F(X,2);R)\to H_\ast(X;R)$ or $(\pi_{2,1}^X)_\#:\pi_\ast(F(X,2))\to \pi_\ast(X)$ are not surjective, then $sec\hspace{.1mm}(\pi_{2,1}^X)= 2$. In particular, $X$ has the FPP.
\end{corollary}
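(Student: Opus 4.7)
The plan is to deduce this as a direct consequence of the Key Lemma (Proposition~\ref{secop-pi-k-1}), the Main Theorem (Theorem~\ref{characterizacao-ppf}), and the observation recorded in the third bullet after Lemma~\ref{prop-sectional-category} (namely that a global section induces splittings of the associated maps on homology and on homotopy groups). In fact, the statement to be proven is the exact analogue, for the functors $H_\ast(-;R)$ and $\pi_\ast(-)$, of Corollary~\ref{suficiente-ppf}, and it should be handled in the same spirit.

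More concretely, I would argue by contradiction. Suppose $sec\hspace{.1mm}(\pi_{2,1}^X)\neq 2$. By Proposition~\ref{secop-pi-k-1} we have $sec\hspace{.1mm}(\pi_{2,1}^X)\leq 2$, so the only remaining possibility is $sec\hspace{.1mm}(\pi_{2,1}^X)=1$. This means that $\pi_{2,1}^X:F(X,2)\to X$ admits a (global) continuous section $s:X\to F(X,2)$ satisfying $\pi_{2,1}^X\circ s=1_X$. Applying the homology functor $H_\ast(-;R)$ and the homotopy functor $\pi_\ast(-)$ to this identity, one obtains
\[
(\pi_{2,1}^X)_\ast\circ s_\ast=1_{H_\ast(X;R)}\qquad\text{and}\qquad (\pi_{2,1}^X)_\#\circ s_\#=1_{\pi_\ast(X)},
\]
so that both $(\pi_{2,1}^X)_\ast$ and $(\pi_{2,1}^X)_\#$ are (split) epimorphisms. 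This contradicts the hypothesis that at least one of them fails to be surjective. Hence $sec\hspace{.1mm}(\pi_{2,1}^X)=2$, and Theorem~\ref{characterizacao-ppf} yields that $X$ has the FPP.

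The only step that requires any thought is the passage from the existence of a section to the surjectivity of the induced maps, but this is the familiar ``retraction implies split epimorphism'' argument, which is precisely the content already highlighted in the bullet points following Lemma~\ref{prop-sectional-category}. No step presents a substantive obstacle; the corollary is essentially a repackaging of the Main Theorem once one recognizes that non-surjectivity of $(\pi_{2,1}^X)_\ast$ or $(\pi_{2,1}^X)_\#$ rules out the existence of a global section of $\pi_{2,1}^X$.
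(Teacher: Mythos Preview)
Your proof is correct and matches the paper's implicit reasoning: the corollary is stated there without proof, resting on the second bullet after Lemma~\ref{prop-sectional-category} (non-surjectivity of $p_\ast$ or $p_\#$ forces $sec(p)\geq 2$) together with the Key Lemma and the Main Theorem, and your contrapositive argument simply unwinds that bullet. One minor point: the observation you want is the \emph{second} bullet, not the third (which concerns injectivity of $p^\ast$ on cohomology), but since you spell out the ``section $\Rightarrow$ split epimorphism'' step explicitly, your argument is self-contained and unaffected.
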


\begin{example}\label{rp2}
It is easy to see that $\pi_2(F(\mathbb{RP}^2,2))=0$ is trivial and $\pi_2(\mathbb{RP}^2)=\mathbb{Z}$. Then the induced homomorphism $(\pi_{2,1}^{\mathbb{RP}^2})_\#:\pi_2(F(\mathbb{RP}^2,2))\to \pi_2(\mathbb{RP}^2)$ is not surjective, and thus $sec\hspace{.1mm}(\pi_{2,1}^{\mathbb{RP}^2})= 2$.  In particular, $\mathbb{RP}^2$ has the FPP. This part can also be proved by employing Lefschetz`s fixed point theorem.
\end{example}

\begin{remark}
For $k\geq l\geq r$, consider the following diagram
\begin{eqnarray*}
 \xymatrix{ F(X,k) \ar[r]^{\pi_{k,l}^X}\ar[d]_{\pi_{k,r}^X} & F(X,l)\ar[dl]^{\pi_{l,r}^X} \\
            F(X,r) }
\end{eqnarray*}
It is easy to see that if $\pi_{l,r}^X\simeq$ \red{const}, then $\pi_{k,r}^X\simeq$ \red{const} for any $k\geq l\geq r$. Moreover, we have $MR[\pi_{l,r}^X,a]\geq MR[\pi_{k,r}^X,a] \text{ for any } k\geq l\geq r$. 
\end{remark}

\begin{proposition}\label{conditions}
\colorado{Let $X$ be a connected CW complex with $MR(\pi_{2,1}^X,x_0)=0$. Assume that there exist  $\alpha\in \widetilde{H}^\ast(X;R)$ with $\alpha\neq 0$ and $i^\ast(\alpha)=0\in \widetilde{H}^\ast(X-\{x_0\};R)$ for some $x_0\in X$, that is, $i^\ast:\widetilde{H}^\ast(X;R)\to \widetilde{H}^\ast(X-\{x_0\};R)$ is not injective, where $i:X-\{x_0\}\hookrightarrow X$ is the inclusion map.}
Then $sec\hspace{.1mm}(\pi_{2,1}^X)=2$. In particular, $X$ has the FPP.
\end{proposition}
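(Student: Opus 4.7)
The plan is to convert the hypothesis $MR[\pi_{2,1}^X,x_0]=0$ into a homotopy factorization of $\pi_{2,1}^X$ through $X-\{x_0\}$, transport the given class $\alpha$ to a nonzero element of $\ker\bigl((\pi_{2,1}^X)^*\bigr)$, and then invoke Corollary~\ref{suficiente-ppf}.

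First, by definition of the minimal root number, $MR[\pi_{2,1}^X,x_0]=0$ yields a map $g:F(X,2)\to X$ with $g\simeq \pi_{2,1}^X$ and $g^{-1}(x_0)=\varnothing$. Equivalently, the image of $g$ avoids $x_0$, so $g$ factors as $g=i\circ h$ for some $h:F(X,2)\to X-\{x_0\}$, where $i:X-\{x_0\}\hookrightarrow X$ is the inclusion. Passing to singular cohomology and using homotopy invariance, we obtain the factorization $(\pi_{2,1}^X)^*=g^*=h^*\circ i^*$ as homomorphisms $H^*(X;R)\to H^*(F(X,2);R)$.

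The hypothesis now provides $\alpha\in \widetilde{H}^*(X;R)$ with $\alpha\neq 0$ and $i^*(\alpha)=0$. Feeding $\alpha$ into the factorization gives $(\pi_{2,1}^X)^*(\alpha)=h^*\bigl(i^*(\alpha)\bigr)=0$, so $(\pi_{2,1}^X)^*$ fails to be injective on $H^*(X;R)$. Corollary~\ref{suficiente-ppf} then forces $sec\hspace{.1mm}(\pi_{2,1}^X)=2$ and, via Theorem~\ref{characterizacao-ppf}, implies that $X$ has the FPP.

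The only delicate point is the first step: recognising that $MR[f,x_0]=0$ says precisely that $f$ is homotopic to a map whose image misses $x_0$, so that cohomology sees the kernel class $\alpha$ through the inclusion $X-\{x_0\}\hookrightarrow X$. Once this translation is made, the argument reduces to the standard cohomological lower bound for sectional number already encapsulated in Corollary~\ref{suficiente-ppf}; the CW hypothesis on $X$ is not essential to this core reasoning, but is presumably imposed to guarantee that $X-\{x_0\}$ and the inclusion $i$ behave well homotopically.
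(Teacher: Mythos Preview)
Your proof is correct and follows essentially the same route as the paper: translate $MR[\pi_{2,1}^X,x_0]=0$ into a homotopy factorization $\pi_{2,1}^X\simeq i\circ h$ through $X-\{x_0\}$, pull back $\alpha$ to obtain a nonzero class in $\ker(\pi_{2,1}^X)^*$, and conclude via Corollary~\ref{suficiente-ppf}. The only cosmetic difference is that the paper names a single map $\varphi$ (playing both the roles of your $g$ and $h$), whereas you separate the map to $X$ from its corestriction to $X-\{x_0\}$; your presentation is arguably the cleaner of the two.
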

\begin{proof}
%
From $MR(\pi_{2,1}^X,x_0)=0$, there exist a continuous map $\varphi:F(X,2)\to X$ such that $\varphi^{-1}(x_0)=\emptyset$ and $\varphi\simeq\pi_{2,1}^X$. We have the \blue{homotopy} commutative diagram
\begin{eqnarray*}
 \xymatrix{ F(X,2) \ar[r]^{\pi_{2,1}^X}\ar[d]_{\varphi} & X\\
            X-\{x_0\}.\ar[ur]^{i}  }
\end{eqnarray*}
The fact $\pi_{2,1}^X\simeq i\circ\varphi$ implies $\varphi^\ast\circ i^\ast=(\pi_{2,1}^X)^\ast$. In particular, $(\pi_{2,1}^X)^\ast(\alpha)=\varphi^\ast\circ i^\ast(\alpha)=0$. Therefore, there exist $\alpha\in \widetilde{H}^\ast(X;R)$ with $\alpha\neq 0$ and $(\pi_{2,1}^X)^\ast(\alpha)=0\in \widetilde{H}^\ast(F(X,2);R)$, then $sec\hspace{.1mm}(\pi_{2,1}^X)=2$.
\end{proof}

\begin{example}
    For $\pi_{2,1}^{S^2\vee S^1}:F(S^2\vee S^1,2)\to S^2\vee S^1$, we have $MR[\pi_{2,1}^{S^2\vee S^1},x_0]\geq 1$ for any $x_0\in S^2\vee S^1$. Indeed, we
\colorado{show below that} $sec\hspace{.1mm}(\pi_{2,1}^{S^2\vee S^1})=1$. Also, there exist $\alpha\in \widetilde{H}^1(S^2\vee S^1;R)$ with $\alpha\neq 0$ and $i^\ast(\alpha)=0\in \widetilde{H}^1(S^2;R)$,  
\colorado{so Proposition~\ref{conditions} yields} $MR[\pi_{2,1}^{S^2\vee S^1},x_0]\neq 0$, \colorado{as asserted. Now, in order to construct a cross-section for $\pi_{2,1}^{S^2\vee S^1}$, it suffices to exhibit a selfmap $f\colon S^2\vee S^1\to S^2\vee S^1$ with no fixed points. Think of $S^2\vee S^1$ as $\left(S^2\times \{b_0\}\right)\cup \left(\{a_0\}\times S^1\right)$, where $a_0=(1,0,0)$ and $b_0=(1,0)$. Then the required map $f$ is} 
given by the formulae
    \begin{align*}
     \colorado{f(a,b_0)} &
     = \colorado{(a_0,\gamma(a_1)), \mbox{ for any $a=(a_1,a_2,a_3)\in S^2$, and}}\\
     \colorado{f(a_0,b)} &=\colorado{(a_0,-b), \text{ for any } b\in S^1,}
    \end{align*}
\colorado{where $\gamma\colon [-1,1]$ is a path in $S^1$ from $b_0$ to $-b_0$.}
\end{example}

We next relate our results to Farber's topological complexity, a homotopy invariant of $X$ introduced in \cite{farber2003topological}. Let $PX$ denote the space of all continuous paths $\gamma: [0,1] \to X$ in $X$ and  $e_{0,1}: PX \to X \times X$ denote the map associating to any path $\gamma\in PX$ the pair of its initial and end points, i.e., $e_{0,1}(\gamma)=(\gamma(0),\gamma(1))$. Equip the path space $PX$ with the compact-open topology. 

\begin{definition}\cite{farber2003topological}
The \textit{topological complexity} of a path-connected space $X$, denoted by TC$(X)$, is the least integer $m$ such that the cartesian product $X\times X$ can be covered with $m$ open subsets $U_i$ such that, for any $i = 1, 2, \ldots, m$, there exists a continuous local section $s_i:U_i \to PX$ of $e_{0,1}$, that is, $e_{0,1}\circ s_i = id$ over $U_i$. If no such $m$ exists, we set TC$(X)=\infty$. 
\end{definition}

We have $\text{TC}(X)=1$ if and only if $X$ is contractible. The TC is a homotopy invariant, i.e., if $X\simeq Y$ then $\text{TC}(X)=\text{TC}(Y)$. Moreover, $\text{cat}(X)\leq \text{TC}(X)\leq 2\text{cat}(X)-1$ for any path-connected CW complex $X$. 

\begin{proposition}\label{nul-homotopy-implie-cat2}
Let $X$ be a non-contractible path-connected \colorado{CW complex}. If $\pi_{2,1}^X\simeq x_0$ for some $x_0\in X$ then $X-\{x_0\}$ is contractible in $X$. Furthermore $\text{cat}(X)=2$, \colorado{$\text{TC}(X)\in\{2,3\}$} and \colorado{$sec(\pi_{2,1}^X)=secat(\pi_{2,1}^X)=2$}. 
\end{proposition}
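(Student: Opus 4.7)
The plan is to address the four conclusions in sequence, with the first being the crux. First I would show that $X-\{x_0\}$ is contractible in $X$, meaning the inclusion $i\colon X-\{x_0\}\hookrightarrow X$ is nulhomotopic. The key observation is that the formula $s(x)=(x,x_0)$ defines a continuous section $s\colon X-\{x_0\}\to F(X,2)$ of $\pi_{2,1}^X$ over $X-\{x_0\}$, so that $\pi_{2,1}^X\circ s=i$. Composing the hypothesized nulhomotopy from $\pi_{2,1}^X$ to the constant map at $x_0$ on the right with $s$ then yields a nulhomotopy of $i$, as required.

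For the equality $\text{cat}(X)=2$, the lower bound is immediate from the non-contractibility of $X$. For the upper bound I would exhibit a categorical open cover of size two: take $U_1:=X-\{x_0\}$, which is open (since CW complexes are Hausdorff) and is contractible in $X$ by the previous step, together with $U_2$, a contractible open neighborhood of $x_0$; such a $U_2$ exists because CW complexes are locally contractible. The bounds on $\text{TC}(X)$ then follow from the standard inequality $\text{cat}(X)\leq \text{TC}(X)\leq 2\,\text{cat}(X)-1$ recalled in the excerpt, which forces $\text{TC}(X)\in\{2,3\}$.

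Finally, for the sectional invariants I would apply part (2) of Lemma~\ref{prop-secat-map} to conclude $secat(\pi_{2,1}^X)=\text{cat}(X)=2$; combining Remark~\ref{secat-sec} with the Key Lemma (Proposition~\ref{secop-pi-k-1}) then gives
\[
2=secat(\pi_{2,1}^X)\leq sec\hspace{.1mm}(\pi_{2,1}^X)\leq 2,
\]
pinching both invariants to $2$. I do not anticipate any serious obstacle; the most delicate point is merely ensuring the existence of a contractible open neighborhood of $x_0$, which is guaranteed by the local contractibility of CW complexes.
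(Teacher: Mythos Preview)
Your proposal is correct and follows essentially the same approach as the paper: the homotopy $G(x,t)=H((x,x_0),t)$ used there is precisely your composition of the nulhomotopy $H$ with the section $s(x)=(x,x_0)$, and the remaining conclusions are deduced via the same chain of inequalities (Key Lemma, Remark~\ref{secat-sec}, and Lemma~\ref{prop-secat-map}(2)). The only difference is that you spell out the categorical open cover $\{X-\{x_0\},U_2\}$ using local contractibility of CW complexes, whereas the paper simply asserts that $\text{cat}(X)=2$ follows ``obviously'' from $X-\{x_0\}$ being contractible in $X$.
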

\begin{proof}
 Let $H:F(X,2)\times [0,1]\to X$ be a homotopy between $\pi_{2,1}^X$ and $x_0$. Set $G:(X-\{x_0\})\times [0,1]\to X$ given by the formula $G(x,t)=H((x,x_0),t)$. We have $G(x,0)=x$ and $G(x,1)=x_0$ for any $x\in X-\{x_0\}$. Thus $X-\{x_0\}$ is contractible in $X$. \rojo{This obviously yields cat$(X)=2$, as well as $2\leq\text{TC}(X)\leq3$. \colorado{Furthermore, we have $2\geq sec(\pi_{2,1}^X)\geq secat(\pi_{2,1}^X)=\text{cat}(X)=2$.} 
 }
\end{proof}


\begin{remark}\label{aaaaa}
It is well known that $\text{cat}(X)=2$ corresponds to the case in which $X$ is a co-H-space. This is a large class of spaces \rojo{including} all suspensions. In addition there are well-known examples of co-H-spaces that are not suspensions. In particular, a potential space satisfying the hypothesis in Proposition~\ref{nul-homotopy-implie-cat2} must be a co-H-space \colorado{of topological complexity 2 or 3 and would have to satisfy the fixed point property, but cannot be a closed smooth manifold. This last condition follows from the positive solution to the topological Poincaré conjecture.}
\end{remark}

\begin{remark}\cite{fadell1962configuration}
Let $X$ be a topological space. The map $\pi_{k,1}^X:F(X,k)\longrightarrow X$ has a continuous section, i.e., $sec\hspace{.1mm}(\pi_{k,1}^X)=1$ if and only if there exist $k-1$ fixed point free continuous self-maps $f_2,\ldots,f_{k}:X\longrightarrow X$ which are non-coincident, that is, $f_i(x)\neq f_j(x)$ for any $i\neq j$ and $x\in X$.
\end{remark}

\begin{example}
Let $G$ be a topological group with \blue{cardinality} $|G|\geq k$. Then $sec\hspace{.1mm}(\pi_{k,1}^G)=1$, because the map $s:G\to F(G,2),~g\mapsto (g,g_1g,\ldots,g_{k-1}g)$ is a cross section for $\pi_{k,1}^G$ (for some fixed $(g_1,\ldots,g_{k-1})\in F(G-\{e\},k-1)$).
\end{example}

\begin{example}\cite{fadell1962configuration}
Let $M$ be a topological manifold without boundary and let $Q_m\subset M$ be a finite subset with $m$ elements. Then $sec\hspace{.1mm}(\pi_{k,1}^{M-Q_m})=1$ for any $m\geq 1$.
\end{example}

\begin{proposition}\label{secop-pi-k-r}
\blue{Let $X$ be a Hausdorff space.} For any $k>r\geq 1$, we have 
$sec\hspace{.1mm}(\pi_{k,r}^X)\leq \binom{k}{r}$, 
where $\binom{k}{r}=\dfrac{k!}{r!(k-r)!}$, 
the standard binomial coefficient.
\end{proposition}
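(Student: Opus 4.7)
The plan is to mimic the construction used in Proposition~\ref{secop-pi-k-1}, but indexing the cover by $(k-r)$-element subsets of $\{1,\ldots,k\}$ instead of singletons. Fix once and for all a configuration $(p_1,\ldots,p_k)\in F(X,k)$. For each subset $I\subseteq\{1,\ldots,k\}$ with $|I|=k-r$, written as $I=\{i_1<\cdots<i_{k-r}\}$, I would set
\[
U_I:=\{(x_1,\ldots,x_r)\in F(X,r)\mid x_j\neq p_i\text{ for all }j\in\{1,\ldots,r\}\text{ and all }i\in I\},
\]
and define $s_I\colon U_I\to F(X,k)$ by
\[
s_I(x_1,\ldots,x_r):=(x_1,\ldots,x_r,p_{i_1},\ldots,p_{i_{k-r}}).
\]

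First I would check that $U_I$ is open in $F(X,r)$: since $X$ is Hausdorff, points of $X$ are closed, so each condition $x_j\neq p_i$ cuts out an open subset of $X^r$, and $U_I$ is obtained by intersecting $F(X,r)$ with finitely many such sets. Next I would verify that $s_I$ takes values in $F(X,k)$: the $r$ coordinates $x_1,\ldots,x_r$ are pairwise distinct because $(x_1,\ldots,x_r)\in F(X,r)$; the $k-r$ coordinates $p_{i_1},\ldots,p_{i_{k-r}}$ are pairwise distinct because $(p_1,\ldots,p_k)\in F(X,k)$; and the condition defining $U_I$ guarantees the two blocks are disjoint. Continuity of $s_I$ is clear, and $\pi_{k,r}^X\circ s_I$ is the inclusion $U_I\hookrightarrow F(X,r)$ by construction.

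The one remaining point is that the family $\{U_I\}_{|I|=k-r}$ actually covers $F(X,r)$, and this is where the bound $\binom{k}{r}=\binom{k}{k-r}$ will come from. Given $(x_1,\ldots,x_r)\in F(X,r)$, the set of indices $J:=\{i\in\{1,\ldots,k\}\mid x_j=p_i\text{ for some }j\}$ has cardinality at most $r$ (each $x_j$ coincides with at most one $p_i$, since the $p_i$ are distinct). Therefore $\{1,\ldots,k\}\setminus J$ has cardinality at least $k-r$, so we can choose any $(k-r)$-subset $I$ of the complement, and then $(x_1,\ldots,x_r)\in U_I$. I do not expect a serious obstacle here; the only delicate ingredient is the Hausdorff hypothesis, used exactly to ensure that the $U_I$ are open. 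Since there are $\binom{k}{k-r}=\binom{k}{r}$ subsets of the required size, the cover has cardinality at most $\binom{k}{r}$, yielding $\mathrm{sec}(\pi_{k,r}^X)\leq \binom{k}{r}$.
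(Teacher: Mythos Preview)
Your argument is correct and is essentially the same construction as the paper's: you index the cover by $(k-r)$-element subsets of $\{1,\ldots,k\}$ while the paper indexes by the complementary $r$-element subsets of $Q_k$, but the resulting open sets $U_I=F(X\setminus\{p_i:i\in I\},r)$ and local sections coincide. Your write-up is in fact more careful than the paper's, since you spell out why the $U_I$ are open (Hausdorff), why $s_I$ lands in $F(X,k)$, and the pigeonhole reason why the $U_I$ cover $F(X,r)$.
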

\begin{proof}
Let $(p_1,\ldots,p_k)\in F(X,k)$ be a fixed $k-$tuple. Set $Q_k:=\{p_1,\ldots,p_k\}$ and for each $I_r\subseteq Q_k$ with $| I|=r$, let $Q_{I_r}:=Q_k-I_r=\{p_{j_1},\ldots,p_{j_{k-r}}\}$, where $j_1<\cdots<j_{k-r}$. Set $U_{I_r}:=F(M-Q_{I_r},r)$, and \blue{let} $s_{I_r}:U_{I_r}\to F(X,k)$ \blue{be} given by $s_{I_r}(x_1,\ldots,x_r):=(x_1,\ldots,x_r,p_{j_1},\ldots,p_{j_{k-r}})$. We note $U_{I_r}$ is open in $F(M,r)$ and each $s_I$ is a local section of $\pi_{k,r}^X$. Furthermore, $F(M,r)=\bigcup_{I\subseteq Q_k,~\mid I\mid=r}U_I$. Then, $sec\hspace{.1mm}(\pi_{k,r}^X)\leq \binom{k}{r}$.
\end{proof}

\begin{corollary}\label{general-case}
Let $M$ be a connected topological manifold without boundary of dimension at least two. Then  $sec\hspace{.1mm}(\pi_{k,r}^M)\leq \min\{\binom{k}{r},cat(F(M,r))\}$.
\end{corollary}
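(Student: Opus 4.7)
The plan is to observe that the stated bound is the minimum of two inequalities, each of which follows from a result already established in the paper, so the proof amounts to a straightforward combination.

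First, I would invoke the Fadell--Neuwirth fibration theorem (Lemma~\ref{TFN}): since $M$ is a connected topological manifold without boundary of dimension $m\geq 2$, the projection $\pi_{k,r}^M\colon F(M,k)\to F(M,r)$ is a locally trivial bundle, in particular a fibration. This hypothesis on $M$ is exactly what allows us to promote $\pi_{k,r}^M$ from a mere continuous map to a fibration, which is crucial for the next step.

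Second, since $\pi_{k,r}^M$ is a fibration with base $F(M,r)$, the first part of Lemma~\ref{prop-secat-map}(1) applies directly to yield $sec\hspace{.1mm}(\pi_{k,r}^M)\leq \text{cat}(F(M,r))$. On the other hand, since $M$ is in particular Hausdorff, Proposition~\ref{secop-pi-k-r} applies verbatim to give $sec\hspace{.1mm}(\pi_{k,r}^M)\leq \binom{k}{r}$. Taking the minimum of these two upper bounds yields the desired inequality.

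There is no real obstacle here, as the result is a genuine corollary: the manifold-without-boundary hypothesis is used only to access the fibration property (Lemma~\ref{TFN}), which in turn is the sole ingredient needed to invoke Lemma~\ref{prop-secat-map}(1), and the binomial bound is inherited for free from the Hausdorff case. The only thing to be mindful of is that Lemma~\ref{prop-secat-map}(1) is stated for $sec$ only when $p$ is a fibration, which is why the dimension and boundary hypotheses on $M$ cannot be dropped in this corollary without weakening the conclusion.
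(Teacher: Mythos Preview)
Your proposal is correct and follows exactly the approach implicit in the paper: the corollary is stated without proof immediately after Proposition~\ref{secop-pi-k-r}, and the intended argument is precisely the combination of that proposition with Lemma~\ref{TFN} and Lemma~\ref{prop-secat-map}(1) that you describe.
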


\begin{example}\label{sec-contractible}
Let $M$ be a contractible topological manifold without boundary of dimension at least two. Then $sec\hspace{.1mm}(\pi_{k,1}^M)\leq \min\{k,cat(M)=1\}=1$. In particular, $M$ does not have the FPP.
\end{example}

\begin{remark}\label{diagram-spheres}
\blue{From} the diagram
\begin{eqnarray*}
 \xymatrix{ F(X,k) \ar[r]^{\pi_{k,k-1}^X}\ar[d]_{\pi_{k,1}^X} & F(X,k-1)\ar[dl]^{\pi_{k-1,1}^X} \\
            X }
\end{eqnarray*}
\blue{it is clear} that, if $\pi_{k,1}^X$ admits a section, then $\pi_{k-1,1}^X$ also admits a section. Thus $sec\hspace{.1mm}(\pi_{k,1}^X)=1$ implies $sec\hspace{.1mm}(\pi_{r,1}^x)=1$ for any $r\leq k$. Furthermore, when $X=S^d$, \blue{the corresponding} diagram
\begin{eqnarray*}
 \xymatrix{ F(S^d,k) \ar[r]^{\pi_{k,2}^{S^d}}\ar[d]_{\pi_{k,1}^{S^d}} & F(S^d,2)\ar[dl]^{\pi_{2,1}^{S^d}} \\
            S^d }
\end{eqnarray*}
and \blue{the fact} that $\pi_{2,1}^{S^d}$ always admits a section \blue{imply that,} if  $\pi_{k,2}^{S^d}$ admits a section, \blue{then}, so does $\pi_{k,1}^{S^d}$. The converse is also true, i.e., if  $\pi_{k,1}^{S^d}$ admits a section, \blue{then} so does $\pi_{k,2}^{S^d}$ \cite{fadell1962configuration}.
\end{remark}

\begin{proposition}\label{sec-even-spheres}
\red{If} $k>2$ and $d$ even, \red{then} $sec\hspace{.1mm}(\pi_{k,r}^{S^d})=cat(F(S^d,r))=2$, for \blue{$r\in\{1,2\}$.}
\end{proposition}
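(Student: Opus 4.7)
The plan is to establish the two inequalities $sec\hspace{.1mm}(\pi_{k,r}^{S^d}) \leq 2$ and $sec\hspace{.1mm}(\pi_{k,r}^{S^d}) \geq 2$ separately. For the upper bound, I first observe that $F(S^d,1) = S^d$, while for $r=2$ the Fadell--Neuwirth fibration $\pi_{2,1}^{S^d}$ (Lemma~\ref{TFN}) has contractible fiber $S^d\setminus\{pt\}\cong\mathbb{R}^d$, hence is a weak equivalence between CW complexes. Consequently $F(S^d,2)\simeq S^d$ and $cat(F(S^d,r)) = cat(S^d) = 2$ for both $r\in\{1,2\}$, so Corollary~\ref{general-case} yields $sec\hspace{.1mm}(\pi_{k,r}^{S^d})\leq\min\{\binom{k}{r},cat(F(S^d,r))\}=2$.

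For the lower bound, I would reduce to the single case $k=3$, $r=1$. Indeed, Remark~\ref{diagram-spheres} ensures that a section of $\pi_{k,1}^{S^d}$ produces, by composition, a section of $\pi_{r',1}^{S^d}$ for every $r'\leq k$, and in particular for $r'=3$; the same remark also records that, for $S^d$, $\pi_{k,2}^{S^d}$ admits a section if and only if $\pi_{k,1}^{S^d}$ does. Hence it suffices to verify that $\pi_{3,1}^{S^d}$ admits no section when $d$ is even.

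Suppose for contradiction that $s(x) = (x, f_2(x), f_3(x))$ is a section, so $f_2, f_3: S^d\to S^d$ are continuous with $x, f_2(x), f_3(x)$ pairwise distinct for every $x\in S^d$. The main step, and the hardest part of the argument, is to extract from this data a nowhere vanishing tangent vector field on $S^d$, which would contradict the hairy ball theorem for even $d$. Setting $a(x) = 1-\langle f_2(x), x\rangle$ and $b(x) = 1-\langle f_3(x), x\rangle$, my proposal is to use
\begin{equation*}
v(x) = (b(x) - a(x))\,x - b(x)\,f_2(x) + a(x)\,f_3(x).
\end{equation*}
A direct inner product computation would give $\langle v(x), x\rangle = 0$, placing $v(x)$ in $T_x S^d$. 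For nonvanishing, note that $v(x)=0$ rewrites as $b(x)(x-f_2(x)) = a(x)(x-f_3(x))$; since $a(x), b(x)\neq 0$ (because $f_i(x)\neq x$), this forces $x-f_2(x)$ and $x-f_3(x)$ to be proportional, i.e., the three distinct points $x, f_2(x), f_3(x)$ to lie on a common line in $\mathbb{R}^{d+1}$. But any line meets $S^d$ in at most two points, a contradiction. Thus $v$ is a nowhere zero continuous tangent vector field on $S^d$, and the hairy ball theorem (for even $d$) closes the argument, completing the proof.
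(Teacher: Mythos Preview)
Your argument is correct. The upper bound matches the paper exactly, and your reduction of the lower bound to the non-existence of a section of $\pi_{3,1}^{S^d}$ via Remark~\ref{diagram-spheres} is the same as in the paper.

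The difference lies in how that non-existence is established. The paper first passes from a hypothetical section of $\pi_{3,1}^{S^d}$ to one of $\pi_{3,2}^{S^d}$, composes it with the antipodal section $\sigma(x)=(x,-x)$, and projects to the third coordinate to obtain a single map $f:S^d\to S^d$ with $f(x)\neq\pm x$; the contradiction is then drawn from degree/homotopy considerations (the conditions force $f$ to be simultaneously homotopic to the identity and to the antipodal map, which have different degrees for $d$ even). You instead work directly with the two maps $f_2,f_3$ coming from a section of $\pi_{3,1}^{S^d}$ and build an explicit nowhere-zero tangent vector field, concluding via the hairy ball theorem. Both arguments are ultimately manifestations of the same obstruction ($\chi(S^d)\neq 0$), but the paper's route is shorter and uses only standard homotopy facts about self-maps of spheres, while yours is more geometric and self-contained, avoiding the passage through $\pi_{3,2}^{S^d}$ at the cost of a slightly longer computation.
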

\begin{proof}
First, we show that $sec\hspace{.1mm}(\pi_{k,r}^{S^d})\geq 2$ for any $d$ even, $k\geq 3$ and $r=1$ or $2$. 
\red{By} the above diagrams, it suffices to show that $sec\hspace{.1mm}(\pi_{3,1}^{S^d})\geq 2$ for $d$ even, that is, $\pi_{3,1}^{S^d}$ does not admit a cross section. If a cross section existed, it would generate a map $f:S^d\to S^d$ such that $f(x)\neq x$ and $f(x)\neq -x$ for any $x\in S^d$. \red{Indeed, suppose that $\pi_{3,1}^{S^d}$ admits a section, it implies that $\pi_{3,2}^{S^d}$ admits a section (see the last part of Remark \ref{diagram-spheres}), say $s:F(S^d,2)\to F(S^d,3)$. Recall that a section $\sigma$ to $\pi_{2,1}^{S^d}$ is given by the formulae $\sigma(x)=(x,-x)$ for any $x\in S^d$. Take $f=p_{3}\circ s\circ\sigma$, where $p_3$ is the projection on the third coordinate.}  Since $f(x)\neq -x$  for every $x\in S^d$ it is easy to see that $f\simeq 1$ and $f$ has degree one and hence fixed points which is a contradiction (We recall that if $f:S^d\to S^d$ has not fixed points then $f$ is homotopic to the antipodal map and $f$ has degree $(-1)^{d+1}$). Thus, $\pi_{3,1}^{S^d}$ does not admit a cross section.

From Proposition \ref{general-case}, $sec\hspace{.1mm}(\pi_{k,r}^{S^d})\leq \min\{\binom{k}{r},cat(F(S^d,r))=cat(S^d)=2\}$. Then $sec\hspace{.1mm}(\pi_{k,r}^{S^d})=2 \text{ for any } k\geq 3, r\in \{1,2\}$ ($d$ even).
\end{proof}


\begin{proposition}\label{prop-prop}
\begin{enumerate}
    \item \red{If $L$ is a deformation retract of $X$}, then $\red{secat}(\pi_{k,1}^L)\geq \red{secat}(\pi_{k,1}^X)$.
    \item \cite{fadell1962configuration} If $M$ is \red{a smooth manifold} and admits a non-vanishing vector field, then $sec\hspace{.1mm}(\pi_{k,1}^M)=1$ for every $k$.
\end{enumerate}
\end{proposition}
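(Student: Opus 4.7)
The proof naturally splits in two. For part (2), I would simply cite Fadell's original construction in \cite{fadell1962configuration}: a non-vanishing vector field integrates (at least locally, which suffices here) to a flow $\phi_t$, and for pairwise distinct sufficiently small positive values $t_2,\ldots,t_k$, the self-maps $f_i(x):=\phi_{t_i}(x)$ are fixed-point free and pairwise non-coincident on $M$; the Remark immediately preceding the proposition then delivers $sec(\pi_{k,1}^M)=1$.

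For part (1), the plan is to transport a homotopy-section cover of $L$ to one of $X$ via the retraction. Fix a retraction $r\colon X\to L$ and the inclusion $j\colon L\hookrightarrow X$, so $r\circ j=\mathrm{id}_L$ and there is a homotopy $H\colon X\times[0,1]\to X$ with $H_0=\mathrm{id}_X$ and $H_1=j\circ r$. The coordinatewise map $j_k\colon F(L,k)\to F(X,k)$, $(x_1,\ldots,x_k)\mapsto(j(x_1),\ldots,j(x_k))$, fits into a commutative square with $\pi_{k,1}^L$ and $\pi_{k,1}^X$, yielding the identity $\pi_{k,1}^X\circ j_k=j\circ\pi_{k,1}^L$. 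Given any open cover $\{U_i\}_{i=1}^n$ of $L$ together with homotopy local sections $s_i\colon U_i\to F(L,k)$ of $\pi_{k,1}^L$, I would set $V_i:=r^{-1}(U_i)$ and define $\widetilde s_i:=j_k\circ s_i\circ r|_{V_i}\colon V_i\to F(X,k)$. The $V_i$ are manifestly open and cover $X$.

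It remains to verify that each $\widetilde s_i$ is a homotopy local section of $\pi_{k,1}^X$ over $V_i$, i.e.\ that $\pi_{k,1}^X\circ\widetilde s_i$ is homotopic in $X$ to the inclusion $V_i\hookrightarrow X$. From the commutative square one obtains $\pi_{k,1}^X\circ\widetilde s_i=j\circ(\pi_{k,1}^L\circ s_i)\circ r|_{V_i}$, and since $\pi_{k,1}^L\circ s_i$ is homotopic in $L$ to the inclusion $U_i\hookrightarrow L$, post-composing with $j$ and pre-composing with $r|_{V_i}$ produces a homotopy in $X$ from $\pi_{k,1}^X\circ\widetilde s_i$ to $j\circ r|_{V_i}$. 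Meanwhile, restricting $H$ to $V_i\times[0,1]$ gives a homotopy in $X$ between the inclusion $V_i\hookrightarrow X$ and $j\circ r|_{V_i}$, and concatenating the two finishes the argument. The only subtle point is the bookkeeping of codomains: the section homotopy lives in $L$ and must be pushed forward by $j$ before being chained with the deformation-retract homotopy in $X$. The conclusion $secat(\pi_{k,1}^X)\leq n$ then yields the desired inequality upon minimizing $n=secat(\pi_{k,1}^L)$.
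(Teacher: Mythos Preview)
Your argument for part~(1) is correct and is exactly the paper's approach: pull back each open set via $r^{-1}$ and push the local section forward by $j_k$ after precomposing with $r$; in fact you supply the verification that $\pi_{k,1}^X\circ\widetilde s_i$ is homotopic to the inclusion, a step the paper leaves to the reader. For part~(2) the paper, like you, simply defers to \cite{fadell1962configuration}; be aware, though, that your informal sketch is a bit loose---on a non-compact manifold the time-$t$ flow map $\phi_t$ need not be globally defined for any fixed $t>0$, so ``sufficiently small $t_i$'' does not by itself yield self-maps of $M$, and Fadell--Neuwirth's actual construction requires more care than this.
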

\begin{proof}
 $(1)$ Let $r:X\to L$ be a deformation retraction, i.e., $r\circ i=1_L$ and $i\circ r\simeq 1_X$, where $i:L\to X$ is the inclusion map. We have the following commutative diagram
\begin{eqnarray*}
 \xymatrix{ F(L,k) \ar[r]^{i^k}\ar[d]_{\pi_{k,1}^L} & F(X,k)\ar[d]^{\pi_{k,1}^X} \\
           L \ar[r]^{i} & X }
\end{eqnarray*} 
Suppose $U\subset L$ is an open set of $L$ with \red{homotopy} local section $s:U\to F(L,k)$ of $\pi_{k,1}^L$. Set $V=r^{-1}(U)\subset X$ and consider $\sigma:V\to F(X,k)$ given by $\sigma=i^k\circ s\circ r$.
\[
\xymatrix{ r^{-1}(U) \ar[r]^{r}\ar@{->}@/_20pt/[rrr]_{\sigma} & U \ar[r]^{s} & F(L,k) \ar[r]^{i^k} & F(X,k)}
\]

We have that $\sigma$ is a \red{homotopy} local section of $\pi_{k,1}^X$. Therefore, $\red{secat}(\pi_{k,1}^L)\geq \red{secat}(\pi_{k,1}^X)$.  \end{proof}

\red{From (\cite{fadell1962configuration}, Theorem $5$-$(b)$) we have if $L\subset X$ is a retract and $\pi_{k,1}^L$ admits a cross-section then $\pi_{k,1}^X$ admits a cross-section. The statement from Proposition \ref{prop-prop} does not hold when $L$ is a retract. For example, $X=S^d$ (with $d\geq 2$ even) and $L=S^d_{-}=\{(x_1,\ldots,x_{d+1})\in S^d:~~x_{d+1}\leq 0\}$. Note that $S^d_{-}$ is a retraction of $S^d$, the retraction map is given by $r:S^d\to S^d_{-}$, $r(x)=x$ if $x\in S^d_{-}$ and $r(x)=(x_1,\ldots,x_d,-x_{d+1})$ if $x_{d+1}\geq 0$. We have $S^d_{-}$ is contractible, indeed it is homeomorphic to the $d-$dimensional closed unit disc $\mathbb{D}^d$ and thus $secat(\pi_{k,1}^L)=1$. Here we consider $k>2$ and thus $secat(\pi_{k,1}^X)=2$ (see Proposition \ref{sec-even-spheres} and Remark \ref{secat-sec}).}

\begin{corollary}\cite{fadell1962configuration}
If $M$ is compact and the first Betti number of $M$ does not vanish, then $sec\hspace{.1mm}(\pi_{k,1}^M)=1$ for every $k$.
\end{corollary}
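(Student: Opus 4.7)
The plan is to derive this from part~(2) of Proposition~\ref{prop-prop}: that statement already gives $sec\hspace{.1mm}(\pi_{k,1}^M)=1$ as soon as $M$ carries a nowhere-vanishing tangent vector field, so everything reduces to producing such a field on a compact manifold $M$ with $b_1(M)\neq 0$. Once the vector field $V$ is in hand, its time-$t$ flow $\psi_t\colon M\to M$ is fixed-point-free for sufficiently small $t\neq 0$; choosing $k-1$ small, pairwise distinct values $t_1,\ldots,t_{k-1}$ produces $k-1$ pairwise non-coincident, fixed-point-free self-maps of $M$, whence the continuous global section $x\mapsto(x,\psi_{t_1}(x),\ldots,\psi_{t_{k-1}}(x))$ of $\pi_{k,1}^M$, which is exactly what is needed to conclude $sec\hspace{.1mm}(\pi_{k,1}^M)=1$.

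To obtain the vector field, I would first pick a nonzero class $\alpha\in H^1(M;\mathbb{Z})$ and, via the Eilenberg--MacLane identification $K(\mathbb{Z},1)=S^1$, realize it by a smooth map $\phi\colon M\to S^1$. Pulling back the canonical angular $1$-form $d\theta$ then yields a closed, nonzero $1$-form $\omega=\phi^{\ast}(d\theta)$ on $M$. After fixing a Riemannian metric, the musical isomorphism converts $\omega$ into a smooth vector field $V=\omega^{\sharp}$ on $M$. One would then like to arrange, either by replacing $\omega$ by a cohomologous representative or by a local surgery near its zero set, that $V$ is actually nowhere vanishing; that is the point at which the hypothesis $b_1(M)\neq 0$ is to be used in an essential way.

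The main obstacle is precisely this last step, since a generic closed $1$-form in a prescribed cohomology class is singular, and replacing it by a nowhere-vanishing one is what forces Fadell--Neuwirth's classical construction into the picture. Once the nowhere-vanishing field is produced, the reduction through Proposition~\ref{prop-prop}(2) and the flow argument above is entirely routine, and the corollary follows.
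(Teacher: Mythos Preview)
Your overall strategy---reduce to Proposition~\ref{prop-prop}(2) by producing a nowhere-vanishing vector field, then use its flow---matches the route the paper implicitly intends (the statement is recorded as a corollary of that proposition, with no argument beyond the citation to \cite{fadell1962configuration}), and the flow construction in your first paragraph is fine once such a field is in hand. The problem is the step you yourself flag as the ``main obstacle'': it cannot be completed in general. A closed smooth manifold carries a nowhere-vanishing vector field if and only if $\chi(M)=0$ (Poincar\'e--Hopf), and the hypothesis $b_1(M)\neq 0$ does \emph{not} force $\chi(M)=0$. For a concrete counterexample take $M=\mathbb{CP}^{2}\mathbin{\#}(S^{1}\times S^{3})$: this is a closed orientable $4$-manifold with $b_1(M)=1$ (from the $S^{1}\times S^{3}$ summand) but
\[
\chi(M)=\chi(\mathbb{CP}^{2})+\chi(S^{1}\times S^{3})-\chi(S^{4})=3+0-2=1\neq 0.
\]
No choice of cohomologous representative for $\omega$ and no ``local surgery near its zero set'' can remove all zeros of $V=\omega^{\sharp}$ here, because the signed count of zeros is pinned at $\chi(M)=1$. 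So the proposed reduction to Proposition~\ref{prop-prop}(2) cannot close as written.

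Since the paper supplies no proof of its own, there is nothing further to compare your attempt against. If the corollary is correct in the stated generality, the mechanism must be something other than a global non-vanishing field---for instance, a direct construction of the $k-1$ pairwise non-coincident fixed-point-free self-maps out of the map $M\to S^{1}$, without passing through a flow on all of $M$---and neither your proposal nor the paper provides that.
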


\begin{corollary}\cite{fadell1962configuration}\label{sec-odd-manifolds}
If $M$ is an odd-dimensional differentiable manifold, \blue{then} $sec\hspace{.1mm}(\pi_{k,1}^M)=1$ for every $k$.
\end{corollary}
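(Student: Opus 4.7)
The plan is to reduce this statement directly to Proposition~\ref{prop-prop}(2), which asserts that whenever a smooth manifold admits a nowhere-vanishing vector field then $sec\hspace{.1mm}(\pi_{k,1}^M)=1$ for every $k$. Hence, the entire task amounts to producing a nowhere-vanishing vector field on an arbitrary odd-dimensional differentiable manifold $M$.

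The first step is to split into cases depending on whether $M$ is closed or not. If $M$ is not closed (that is, noncompact, or with nonempty boundary), then it is a classical fact of differential topology that $M$ admits a nowhere-vanishing tangent vector field. One way to see this is to triangulate, extend a nonzero vector field skeleton-by-skeleton, and use that on an open manifold one can always push any zeros off to infinity; alternatively, one can cite that such $M$ has the homotopy type of a CW complex of dimension less than $\dim M$, so the obstructions to a nonvanishing section of $TM$ vanish. I would simply invoke this as a standard fact, since dimension parity plays no role here.

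The second step, which is the real content, is the closed case. Here $M$ is an odd-dimensional closed smooth manifold. By Poincaré duality with $\mathbb{Z}/2$ coefficients (or with $\mathbb{Q}$ coefficients together with the vanishing of middle-dimensional pairing in odd dimension), the Euler characteristic satisfies $\chi(M)=0$. The Poincaré--Hopf theorem then tells us that any vector field with isolated zeros on $M$ has total index equal to $\chi(M)=0$, and a standard cancellation-of-indices argument (pairing up zeros of opposite index along arcs and modifying the vector field locally) produces a nowhere-vanishing vector field on $M$.

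Combining the two cases, every odd-dimensional differentiable manifold admits a nowhere-vanishing vector field, and Proposition~\ref{prop-prop}(2) immediately yields $sec\hspace{.1mm}(\pi_{k,1}^M)=1$ for every $k$. I do not expect a real obstacle here: the nontrivial input (Poincaré--Hopf plus index cancellation) is entirely standard, and the open-manifold case is folklore; the only care needed is to handle manifolds with boundary as part of the noncompact case so that the statement truly covers \emph{all} odd-dimensional differentiable manifolds.
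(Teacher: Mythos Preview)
Your proposal is correct and follows exactly the route the paper intends: the corollary is stated without proof and placed immediately after Proposition~\ref{prop-prop}(2), so the implicit argument is precisely ``odd-dimensional smooth manifolds admit a nowhere-vanishing vector field, now apply Proposition~\ref{prop-prop}(2).'' You have simply spelled out the classical justification (Poincar\'e duality gives $\chi(M)=0$ in the closed case, then Poincar\'e--Hopf with index cancellation; the open case is folklore), which the paper leaves to the citation \cite{fadell1962configuration}.
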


\begin{corollary}\label{sec-odd-spheres}
\red{If} $k>2$ and $d$ odd, \red{then} $sec\hspace{.1mm}(\pi_{k,r}:F(S^d,k)\to F(S^d,r))=1$ for \blue{$r\in\{1,2\}$.}
\end{corollary}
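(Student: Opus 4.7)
The plan is to dispatch the two cases $r=1$ and $r=2$ separately, leveraging the results already established just before the statement.

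For the case $r=1$, the space $S^d$ with $d$ odd is a closed (hence boundaryless) odd-dimensional differentiable manifold. Thus Corollary~\ref{sec-odd-manifolds} applies directly and yields $sec\hspace{.1mm}(\pi_{k,1}^{S^d})=1$ for every $k\geq 2$ (in particular for $k>2$). In concrete terms, this also means $\pi_{k,1}^{S^d}$ admits a continuous global cross-section $s\colon S^d\to F(S^d,k)$.

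For the case $r=2$, I would invoke the last assertion of Remark~\ref{diagram-spheres}, namely that for $X=S^d$ the projection $\pi_{k,2}^{S^d}$ admits a global section if and only if $\pi_{k,1}^{S^d}$ does (the ``only if'' direction follows immediately from the commutative diagram $\pi_{k,1}^{S^d}=\pi_{2,1}^{S^d}\circ\pi_{k,2}^{S^d}$ together with the fact that $\pi_{2,1}^{S^d}$ has the canonical antipodal section $x\mapsto(x,-x)$; the ``if'' direction is the content cited from \cite{fadell1962configuration}). Since the previous paragraph supplies a global section for $\pi_{k,1}^{S^d}$, this equivalence delivers a global section for $\pi_{k,2}^{S^d}$, so $sec\hspace{.1mm}(\pi_{k,2}^{S^d})=1$ as well.

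There is essentially no obstacle here beyond assembling the two ingredients; the only point worth double-checking is the direction of the equivalence in Remark~\ref{diagram-spheres} we are using, since we do not need the subtler implication that goes from a section of $\pi_{k,1}^{S^d}$ to one of $\pi_{k,2}^{S^d}$ in full generality: it is exactly the content invoked from \cite{fadell1962configuration}, so no additional argument is required.
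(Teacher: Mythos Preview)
Your proposal is correct and follows exactly the paper's route: the paper's proof is the one-line remark that the result ``follows from Corollary~\ref{sec-odd-manifolds} and the last part of Remark~\ref{diagram-spheres},'' which is precisely what you have unpacked---Corollary~\ref{sec-odd-manifolds} handles $r=1$, and the converse implication in Remark~\ref{diagram-spheres} (cited from \cite{fadell1962configuration}) transfers the section to $r=2$. Your final paragraph is slightly awkwardly worded (you \emph{do} need the subtler direction, the one taking a section of $\pi_{k,1}^{S^d}$ to one of $\pi_{k,2}^{S^d}$), but you correctly note it is supplied by \cite{fadell1962configuration}, so nothing is missing.
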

\begin{proof}
\red{\blue{This} follows from Corollary \ref{sec-odd-manifolds} and the last part of Remark \ref{diagram-spheres}.}
\end{proof}


\section{Topological complexity of a map}\label{tc-map}

Recall that $PE$ denotes the space of all continuous paths $\gamma: [0,1] \longrightarrow E$ in $E$ and  $e_{0,1}: PE \longrightarrow E \times E$ denotes the map associating to any path $\gamma\in PE$ the pair of its initial and end points $\pi(\gamma)=(\gamma(0),\gamma(1))$. Equip the path space $PE$ with the compact-open topology. 

Let $p:E\to B$ be a \colorado{continuous map}, and let $e_p:PE\to E\times B,~e_p=(1\times p)\circ e_{0,1}$.

\begin{definition} 
The \textit{topological complexity} of the map $p$, denoted by TC$(p)$, is the sectional number $sec\hspace{.1mm}(e_p)$ of the map $e_p$, that is, the least integer $m$ such that the cartesian product $E\times B$ can be covered with $m$ open subsets $U_i$ such that for any $i = 1, 2, \ldots , m$ there exists a continuous local section $s_i : U_i \longrightarrow PE$ of $e_{P}$, that is, $e_{P}\circ s_i = id$ over $U_i$. If no such $m$ exists we set TC$(p)=\infty$. 
\end{definition}

We use a definition of topological complexity which generally is not the same that given in \cite{pavesic2019}. However, under certain conditions, these two definitions coincides (see \cite{pavesic2019}). 

The proof of the following statement proceeds by analogy with \cite{pavesic2019}.

\begin{proposition}
For a map $p:E\to B$, we have $\text{TC}(p)\geq\max\{ \text{cat}(B),sec\hspace{.1mm}(p)\}$.
\end{proposition}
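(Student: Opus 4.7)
The plan is to derive both inequalities $\text{TC}(p)\geq sec\hspace{.1mm}(p)$ and $\text{TC}(p)\geq \text{cat}(B)$ independently by starting from a single optimal open cover witnessing $sec\hspace{.1mm}(e_p)=\text{TC}(p)$ and pulling back along the slice inclusion $j_{e_0}\colon B\to E\times B$, $b\mapsto (e_0,b)$, for an arbitrarily chosen basepoint $e_0\in E$.

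First I would fix an open cover $\{U_1,\ldots,U_m\}$ of $E\times B$ together with local sections $s_i\colon U_i\to PE$ of $e_p$, where $m=\text{TC}(p)$. Define $V_i:=j_{e_0}^{-1}(U_i)=\{b\in B\mid (e_0,b)\in U_i\}$. Each $V_i$ is open in $B$, and since the $U_i$ cover $E\times B$, the $V_i$ cover $B$. Note that for any $b\in V_i$ the path $\gamma:=s_i(e_0,b)\in PE$ satisfies $e_p(\gamma)=(e_0,b)$, so $\gamma(0)=e_0$ and $p(\gamma(1))=b$.

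For the bound $\text{TC}(p)\geq sec\hspace{.1mm}(p)$, I would set $\sigma_i\colon V_i\to E$ by $\sigma_i(b):=s_i(e_0,b)(1)$. The observation above gives $p\circ\sigma_i=\mathrm{id}_{V_i}$, so $\sigma_i$ is a continuous local section of $p$ over $V_i$. Hence the cover $\{V_i\}$ witnesses $sec\hspace{.1mm}(p)\leq m$. For the bound $\text{TC}(p)\geq\text{cat}(B)$, I would set $H_i\colon V_i\times[0,1]\to B$ by $H_i(b,t):=p(s_i(e_0,b)(t))$; then $H_i(\cdot,0)$ is the constant map at $p(e_0)$, while $H_i(\cdot,1)$ is the inclusion $V_i\hookrightarrow B$. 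Thus each $V_i$ is contractible in $B$, and $\{V_i\}$ is a categorical cover, yielding $\text{cat}(B)\leq m$. Combining both inequalities gives the desired conclusion.

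There is no genuine obstacle here; the argument is essentially an unravelling of definitions once one notices that a section of $e_p$ over $(e_0,b)$ simultaneously produces a preimage of $b$ under $p$ (via its endpoint) and a contracting homotopy of $b$ to $p(e_0)$ in $B$ (via composing the path with $p$). The only point requiring minor care is that $E$ be nonempty so that the basepoint $e_0$ exists; this is automatic when $\text{TC}(p)<\infty$, since otherwise the statement is vacuous.
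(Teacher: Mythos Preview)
Your proof is correct and follows essentially the same approach as the paper: fix a basepoint in $E$, pull back the open cover of $E\times B$ along the slice inclusion $b\mapsto(e_0,b)$, and use the endpoint of the sectioned path to get a local section of $p$ while composing the path with $p$ to get a null-homotopy in $B$. Your write-up is in fact slightly more careful than the paper's, in that you explicitly note the nonemptiness of $E$ when $\text{TC}(p)<\infty$.
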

\begin{proof}
Let $U\subset E\times B$ be an open subset and $s:U\to PE$ be a partial section of $e_p$. Fix $x_0\in E$ and consider the inclusion $i_0:B\to E\times B$, given as $i_0(b)=(x_0,b)$. Set $V=i_0^{-1}(U)\subset B$, it is an open subset of $B$. 
Consider the map $H:V\times [0,1]\to B$ given by $H(b,t)=p(s(x_0,b)(t))$. It is easy to check that $H$ is a null-homotopy. We conclude that $\text{TC}(p)\geq cat(B)$.

On the other hand, consider the map $\sigma:V\to E$ defined by $\sigma(b)=s(x_0,b)(1)$. One can easily see that $\sigma$ is a partial section over $V$ to $p$. Therefore, $\text{TC}(p)\geq sec\hspace{.1mm}(p)$.
\end{proof}

The proof of the following statement proceeds by analogy with \cite{pavesic2019}.

\begin{proposition}\label{section-ineq}
Consider the diagram of maps $E^\prime\stackrel{p^\prime}{\to} E\stackrel{p}{\to} B\stackrel{p^{\prime\prime}}{\to}B^\prime$. If $p$ admits a section, then
 \begin{itemize}
     \item[$a$)] $\text{TC}(p^{\prime\prime})\leq \text{TC}(p^{\prime\prime} p)$.
     \item[$b$)] $\text{TC}(p p^\prime)\leq \text{TC}(p^\prime)$.
 \end{itemize}
  In particular, $\text{TC}(B)\leq\text{TC}(p)\leq \text{TC}(E)$. 
\end{proposition}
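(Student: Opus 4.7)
The plan is to prove both inequalities in a uniform way by inserting the section $s\colon B\to E$ of $p$ into one coordinate of a product, pulling back the open cover witnessing the right-hand $\text{TC}$ along this ``probe'' map, and converting the local sections into ones for the left-hand $\text{TC}$. Throughout I use $\text{TC}(f)=sec\hspace{.1mm}(e_f)$, where $e_f\colon PX\to X\times Y$ sends a path $\gamma$ to $(\gamma(0),f(\gamma(1)))$.

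For part (a), I would consider the probe map $s\times 1_{B^\prime}\colon B\times B^\prime\to E\times B^\prime$. Starting from an open cover $\{U_i\}$ of $E\times B^\prime$ equipped with local sections $\sigma_i\colon U_i\to PE$ of $e_{p^{\prime\prime}p}$, I set $V_i:=(s\times 1_{B^\prime})^{-1}(U_i)$; these cover $B\times B^\prime$ because each $(s(b),b^\prime)$ lies in some $U_i$. I then define $\tau_i\colon V_i\to PB$ by $\tau_i(b,b^\prime)(t):=p\bigl(\sigma_i(s(b),b^\prime)(t)\bigr)$. The identity $p\circ s=1_B$ yields $\tau_i(b,b^\prime)(0)=b$ and $p^{\prime\prime}(\tau_i(b,b^\prime)(1))=b^\prime$, so each $\tau_i$ is a partial section of $e_{p^{\prime\prime}}$, giving $\text{TC}(p^{\prime\prime})\leq\text{TC}(p^{\prime\prime}p)$.

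For part (b), the analogous probe map is $1_{E^\prime}\times s\colon E^\prime\times B\to E^\prime\times E$. Given an open cover $\{U_i\}$ of $E^\prime\times E$ with local sections $\sigma_i\colon U_i\to PE^\prime$ of $e_{p^\prime}$, the sets $V_i:=(1_{E^\prime}\times s)^{-1}(U_i)$ cover $E^\prime\times B$, and $\tau_i:=\sigma_i\circ(1_{E^\prime}\times s)|_{V_i}$ obeys $p(p^\prime(\tau_i(e^\prime,b)(1)))=p(s(b))=b$, so it is a partial section of $e_{pp^\prime}$; no post-composition with $p$ is needed this time. The final assertion then follows by specialization: applying (a) with $p^{\prime\prime}=1_B$ gives $\text{TC}(B)=\text{TC}(1_B)\leq\text{TC}(p)$ (noting $e_{1_B}=e_{0,1}$), and applying (b) with $p^\prime=1_E$ gives $\text{TC}(p)\leq\text{TC}(1_E)=\text{TC}(E)$.

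The main---and quite mild---obstacle is simply recognizing in which coordinate the section $s$ should be inserted in each case, and that the correct target map is $e_{p^{\prime\prime}}$ (resp.\ $e_{pp^\prime}$) rather than some other variant; once this is settled, continuity of the $\tau_i$ is immediate from the continuity of $s$, $p$, and the standard fact that post-composition with a continuous map is continuous in the compact-open topology on path spaces.
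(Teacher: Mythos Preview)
Your proof is correct and follows essentially the same approach as the paper's own proof: both use the probe maps $s\times 1_{B'}$ and $1_{E'}\times s$ to pull back the open covers, and define the new local sections exactly as you do. The only cosmetic difference is that in part (a) the paper first concatenates with the constant path at $b$ on $[0,\tfrac12]$ before running $p\circ\sigma_i(s(b),b')$; since $p\circ s=1_B$ already forces that path to start at $b$, this extra constant segment is redundant, and your version is slightly cleaner.
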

\begin{proof}
Let $s:B\to E$ be a section to $p$. 

$a)$ Suppose $\alpha_{p^{\prime\prime} p}:U\to PE$ is a partial section of $e_{p^{\prime\prime} p}$ over $U\subset E\times B^\prime$. Set $V:=(s\times 1_{B^\prime})^{-1}(U)\subset B\times B^\prime$. Then we can define the continuous map $\alpha_{p^{\prime\prime}}:V\to PB$ by \[\alpha_{p^{\prime\prime}}(b,b^\prime)(t):=\begin{cases}
    b, & \hbox{for $0\leq t\leq \frac{1}{2}$;} \\
    p(\alpha_{p^{\prime\prime} p}(s(b),b^\prime)(2t-1)), & \hbox{for $\frac{1}{2}\leq t\leq 1$.}
\end{cases}\] Since $\alpha_{p^{\prime\prime}}$ is a partial section of $e_{p^{\prime\prime}}$ over $V$, we conclude that $\text{TC}(p^{\prime\prime})\leq \text{TC}(p^{\prime\prime} p)$.

\smallskip
$b)$ Let $\alpha_{p^\prime}:U\to PE^\prime$ be a partial section to $e_{p^\prime}:PE^\prime\to E^\prime\times E$ over $U\subset E^\prime\times E$. Set $V:=(1_{E^\prime}\times s)^{-1}(U)\subset E^\prime\times B$ and define the continuous map $\alpha_{pp^\prime}:V\to PE^\prime$ given by $\alpha_{pp^\prime}(e^\prime,b):=\alpha_{p^\prime}(e^\prime,s(b))$. It follows that $\alpha_{pp^\prime}$ is a partial section of $e_{pp^\prime}$ over $V$. This implies $\text{TC}(p p^\prime)\leq \text{TC}(p^\prime)$.  
\end{proof}


\begin{theorem}\label{tc-implies-fpp}
Let $X$ be a Hausdorff space.
\begin{enumerate}
    \item If $X$ has the FPP, then $\text{TC}(\pi_{k,1}^X)\geq \max\{ \text{cat}(X),2\}$ for any $k\geq 2$.
    \item If $\text{TC}(\pi_{2,1}^X)<\text{TC}(X)$ or $\text{TC}(\pi_{2,1}^X)> \text{TC}(F(X,2))$, then $sec\hspace{.1mm}(\pi_{2,1}^X)=2$. In particular, $X$ has the FPP.
    \item If $X$ is a non-contractible space which does not have the FPP, then the configuration space $F(X,2)$ is not contractible.
\end{enumerate}
\end{theorem}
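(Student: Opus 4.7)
The plan is to derive all three items from tools already established in the paper: the Key Lemma (Proposition~\ref{secop-pi-k-1}), the FPP characterization Theorem~\ref{characterizacao-ppf}, the general lower bound $\text{TC}(p)\geq \max\{\text{cat}(B),sec\hspace{.1mm}(p)\}$ proved just above, and the sandwich inequality $\text{TC}(B)\leq\text{TC}(p)\leq\text{TC}(E)$ for any $p\colon E\to B$ admitting a section (Proposition~\ref{section-ineq}). Nothing new beyond these building blocks should be needed.

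For item $(1)$, my first step will be to show that FPP forces $sec\hspace{.1mm}(\pi_{k,1}^X)\geq 2$ for every $k\geq 2$. Indeed, a global section of $\pi_{k,1}^X$ has the form $x\mapsto(x,f_2(x),\ldots,f_k(x))$, and each coordinate $f_i\colon X\to X$ must be fixed-point free; this contradicts FPP. (Equivalently, composing such a section with $\pi_{k,2}^X$ would yield a section of $\pi_{2,1}^X$, ruled out by Theorem~\ref{characterizacao-ppf}.) Plugging this into the general TC lower bound gives $\text{TC}(\pi_{k,1}^X)\geq\max\{\text{cat}(X),sec\hspace{.1mm}(\pi_{k,1}^X)\}\geq\max\{\text{cat}(X),2\}$, which is the desired inequality.

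For item $(2)$, I would apply Proposition~\ref{section-ineq} to $F(X,2)\stackrel{\pi_{2,1}^X}{\to}X$: if $\pi_{2,1}^X$ admitted a section, then necessarily $\text{TC}(X)\leq\text{TC}(\pi_{2,1}^X)\leq\text{TC}(F(X,2))$. The hypothesis of $(2)$ is precisely the negation of one of these inequalities, so $\pi_{2,1}^X$ has no section, i.e.\ $sec\hspace{.1mm}(\pi_{2,1}^X)\geq 2$. The Key Lemma forces $sec\hspace{.1mm}(\pi_{2,1}^X)\leq 2$, so equality holds, and Theorem~\ref{characterizacao-ppf} delivers the FPP.

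For item $(3)$, the argument runs contrapositively: if $X$ fails the FPP then Theorem~\ref{characterizacao-ppf} produces a section of $\pi_{2,1}^X$, whence by Proposition~\ref{section-ineq} we get $\text{TC}(X)\leq\text{TC}(F(X,2))$. Since $X$ is non-contractible, $\text{TC}(X)\geq 2$, so $\text{TC}(F(X,2))\geq 2$, and in particular $F(X,2)$ cannot be contractible. I do not foresee a serious obstacle here; the only mild point requiring care is the extraction of a fixed-point-free selfmap from a section in item $(1)$, which however is immediate upon reading off coordinates.
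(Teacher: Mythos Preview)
Your proposal is correct and matches the paper's own argument essentially line for line: all three items are deduced from the lower bound $\text{TC}(p)\geq\max\{\text{cat}(B),sec\hspace{.1mm}(p)\}$, Proposition~\ref{section-ineq}, the Key Lemma, and Theorem~\ref{characterizacao-ppf}, exactly as you outline. One cosmetic remark: in item~$(3)$ you call the argument ``contrapositive'', but what you actually wrote is a direct proof (assume $X$ non-contractible without FPP, conclude $F(X,2)$ non-contractible); the logic is fine, only the label is off.
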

\begin{proof}
\rojo{$(1)$:} We have $\text{TC}(\pi_{k,1})\geq sec\hspace{.1mm}(\pi_{k,1})\geq  2$. We recall that, $sec\hspace{.1mm}(\pi_{2,1})=2$ implies $sec\hspace{.1mm}(\pi_{k,1})\geq 2$, for any $k\geq 2$.

\rojo{$(2)$:} This follows from Proposition \ref{section-ineq}.

\rojo{$(3)$:} By Proposition \ref{section-ineq}, we have $1<\text{TC}(X)\leq \text{TC}(\pi_{2,1})\leq \text{TC}(F(X,2))$ and thus $F(X,2)$ is not contractible.
\end{proof}

\rojo{Item (3) in Theorem \ref{tc-implies-fpp}} gives a partial generalization of the work \rojo{in~\cite{zapata2017non}.}

\begin{example}
We know that the unit disc $D^m$ has the FPP. Then $\text{TC}(\pi_{k,1}^{D^m})\geq 2$, for any $k\geq 2$.
\end{example}

The following Lemma generalizes the statement given in (\cite{pavesic2019}, pg. 19). 

\begin{lemma}\label{general-pullback}
If $p:E\to B$ is a fibration and $p^\prime:B\to B^\prime$ is a continuous map, then the following diagram is a pullback \begin{eqnarray*}
\xymatrix{ PE \ar[r]^{\,\,p_{\#}} \ar[d]_{e_{p^\prime p}} & PB \ar[d]^{e_{p^\prime}} & \\
       E\times B^\prime  \ar[r]_{\,\, p\times 1_{B^\prime}} &  B\times B^\prime &}
\end{eqnarray*}
\end{lemma}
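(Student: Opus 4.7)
The plan is to verify the universal property of the pullback directly by exhibiting the canonical comparison map
\[
\Phi\colon PE\longrightarrow (E\times B')\times_{B\times B'}PB,\qquad\gamma\longmapsto\bigl((\gamma(0),\,p'(p(\gamma(1)))),\,p\circ\gamma\bigr),
\]
and showing it is a homeomorphism. Commutativity of the square is immediate from the definitions: unwinding $e_{p'}=(1_B\times p')\circ e_{0,1}$, $e_{p'p}=(1_E\times(p'p))\circ e_{0,1}$, and $p_{\#}(\gamma)=p\circ\gamma$, both composites around the square send $\gamma\in PE$ to $(p(\gamma(0)),\,p'(p(\gamma(1))))\in B\times B'$, so $\Phi$ is well-defined.

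Next I would describe the strict pullback concretely as
\[
(E\times B')\times_{B\times B'}PB=\{((e,b'),\delta)\in(E\times B')\times PB\colon p(e)=\delta(0),\;p'(\delta(1))=b'\}.
\]
Since $b'$ is determined by $\delta$, this set is naturally homeomorphic to the fibered product $E\times_B PB=\{(e,\delta)\in E\times PB\colon p(e)=\delta(0)\}$. An inverse to $\Phi$ should associate to each $(e,\delta)\in E\times_B PB$ a path in $E$ starting at $e$ whose $p$-projection is $\delta$; the fibration hypothesis on $p$ provides exactly such a continuous assignment via a Hurewicz lifting function $\Lambda\colon E\times_B PB\to PE$.

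The verification of $\Phi\circ\Lambda=\mathrm{id}$ is immediate from the defining property of $\Lambda$. The main obstacle, and the step where the fibration hypothesis is used in its strongest form, is checking $\Lambda\circ\Phi=\mathrm{id}_{PE}$; this amounts to arguing that a path $\gamma\in PE$ is pinned down by its starting point $\gamma(0)$ together with its projection $p\circ\gamma$, so that the chosen lifting function is forced to recover $\gamma$ itself. Once this identification $PE\cong(E\times B')\times_{B\times B'}PB$ is in place, the claimed pullback diagram follows by the universal property of the strict pullback in the category of topological spaces.
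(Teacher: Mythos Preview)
Your argument has a genuine gap at the step $\Lambda\circ\Phi=\mathrm{id}_{PE}$. You correctly identify this as ``the main obstacle,'' but it is in fact an insurmountable one: a path $\gamma\in PE$ is \emph{not} determined by the pair $(\gamma(0),\,p\circ\gamma)$ for a general Hurewicz fibration $p$. Take for instance $p\colon E\to\ast$ with $E$ any space containing more than one point. Then $E\times_B PB\cong E$, while $PE$ is the full path space of $E$; your map $\Phi$ collapses to $\gamma\mapsto\gamma(0)$, which is far from injective. So the square is not a strict pullback in the category of topological spaces, and no choice of lifting function $\Lambda$ will satisfy $\Lambda\circ\Phi=\mathrm{id}_{PE}$.

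The paper's proof takes a different route and, importantly, proves a weaker statement than you attempt. It does not try to exhibit $PE$ as the strict fibre product. Instead, given any test space $X$ with maps $\alpha\colon X\to E\times B'$ and $\beta\colon X\to PB$ satisfying the compatibility condition, it produces \emph{some} filler $H\colon X\to PE$ by a single application of the homotopy lifting property of $p$ (reinterpreting $\beta$ as a homotopy $X\times I\to B$ with initial lift $p_1\circ\alpha$). No uniqueness of $H$ is claimed or needed. This existence statement is exactly what is used in the subsequent proposition: to transport a local section of $e_{p'}$ to a local section of $e_{p'p}$, one only needs to be able to fill the square, not to do so uniquely. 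Your $\Phi\circ\Lambda=\mathrm{id}$ computation already contains this: it says $\Lambda$ provides the required filler when $X$ is the fibre product itself, and the same lifting-function idea works for a general $X$. The part that fails is only the converse identity, which corresponds to the uniqueness half of the universal property---and that half is simply not available here.
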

\begin{proof}
 For any $\beta:X\to PB$ and any $\alpha:X\to E\times B^\prime$ satisfying $e_{p^\prime}\circ\beta=(p\times 1_{B^\prime})\circ\alpha$, we will check that there exists $H:X\to PE$ such that
$e_{p^\prime\circ p}\circ H=\alpha$ and $p_{\#}\circ H=\beta$.
\begin{eqnarray*}
\xymatrix{ X \ar@/^10pt/[drr]^{\,\,\beta} \ar@/_10pt/[ddr]_{\alpha} \ar@{-->}[dr]_{H} &   &  &\\
& PE \ar[r]^{\,\,p_{\#}} \ar[d]^{e_{p^\prime\circ p}} & PB \ar[d]^{e_{p^\prime}} & \\
       & E\times B^\prime   \ar[r]_{\quad p\times 1_{B^\prime}\quad} &  B\times B^{\prime} &}
\end{eqnarray*} Indeed, note that we have the following commutative diagram:
\begin{eqnarray*}
\xymatrix{ X \ar[r]^{\,\,p_{1}\circ\alpha} \ar[d]_{i_0} & E \ar[d]^{p} \\
       X\times I  \ar[r]_{\,\,\beta} &  B}
\end{eqnarray*} where $p_1$ is the projection onto the first coordinate. Because $p$ is a fibration, there exists $H:X\times I\to E$ satisfying $H\circ i_0=p_1\circ\alpha$ and $p\circ H=\beta$, thus we does.
\end{proof}

The following statement was proved in \cite{pavesic2019}; we give an \blue{elementary} proof in our context. 

\begin{proposition}
If $p:E\to B$ is a fibration, then $\text{TC}(p^\prime p)\leq \text{TC}(p^\prime)$ for any $p^\prime:B\to B^\prime$. In particular, $\text{TC}(p)\leq \text{TC}(B)$. 
\end{proposition}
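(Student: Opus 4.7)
The plan is to combine the pullback description from Lemma~\ref{general-pullback} with the sectional-number pullback inequality from Lemma~\ref{pullback}. Since $\mathrm{TC}$ of a map is by definition the sectional number of the evaluation fibration $e_{(-)}$, pullback-stability of the sectional number should translate directly into the desired inequality.

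More precisely, given $p^{\prime}\colon B\to B^{\prime}$ and assuming $p\colon E\to B$ is a fibration, I would invoke Lemma~\ref{general-pullback} to produce the pullback square
\begin{equation*}
\xymatrix{ PE \ar[r]^{p_{\#}} \ar[d]_{e_{p^{\prime}p}} & PB \ar[d]^{e_{p^{\prime}}} \\
       E\times B^{\prime}  \ar[r]_{p\times 1_{B^{\prime}}} &  B\times B^{\prime} }
\end{equation*}
Then Lemma~\ref{pullback} yields $sec\hspace{.1mm}(e_{p^{\prime}p})\leq sec\hspace{.1mm}(e_{p^{\prime}})$. Unwinding the definitions gives exactly $\mathrm{TC}(p^{\prime}p)\leq \mathrm{TC}(p^{\prime})$.

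For the particular case, I would specialize to $p^{\prime}=1_{B}\colon B\to B$. Then $p^{\prime}p=p$, and $e_{1_{B}}=e_{0,1}\colon PB\to B\times B$ so that $\mathrm{TC}(1_{B})=sec\hspace{.1mm}(e_{0,1})=\mathrm{TC}(B)$. The general inequality therefore gives $\mathrm{TC}(p)\leq \mathrm{TC}(B)$, as claimed.

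The only step that requires care is verifying the hypotheses of Lemma~\ref{general-pullback} are met (namely, that $p$ is a fibration is exactly what was assumed), and confirming that the definition of $e_{p^{\prime}}$ used in Lemma~\ref{general-pullback} agrees with the one appearing in the definition of $\mathrm{TC}(p^{\prime})$; both points are formal once one writes down $e_{q}=(1\times q)\circ e_{0,1}$. Thus no real obstacle arises: the proof is essentially a one-line application of pullback-stability of sectional number, and the substance of the result is packed into Lemma~\ref{general-pullback}.
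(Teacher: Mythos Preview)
Your proposal is correct and follows essentially the same approach as the paper: invoke Lemma~\ref{general-pullback} to obtain the pullback square, then apply Lemma~\ref{pullback} to conclude $sec\hspace{.1mm}(e_{p^{\prime}p})\leq sec\hspace{.1mm}(e_{p^{\prime}})$, which is exactly $\mathrm{TC}(p^{\prime}p)\leq \mathrm{TC}(p^{\prime})$. Your explicit handling of the particular case via $p^{\prime}=1_{B}$ is also correct and simply spells out what the paper leaves implicit.
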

\begin{proof}
 Since $p:E\to B$ is a fibration, the following diagram is a pullback (see Lemma \ref{general-pullback}) \begin{eqnarray*}
\xymatrix{ PE \ar[r]^{\,\,p_{\#}} \ar[d]_{e_{p^\prime p}} & PB \ar[d]^{e_{p^\prime}} & \\
       E\times B^\prime  \ar[r]_{\,\, p\times 1_{B^\prime}} &  B\times B^\prime &}
\end{eqnarray*} This implies $\text{TC}(p^\prime p)=sec\hspace{.1mm}(e_{p^\prime p})\leq sec\hspace{.1mm}(e_{p^\prime})= \text{TC}(p^\prime)$. 
 \end{proof}
 
 \begin{corollary}
 If $p:E\to B$ is a fibration that admits a section, then $\text{TC}(p)=\text{TC}(B)$. In particular, $\text{TC}(p)=1$ if and only if $B$ is contractible.
 \end{corollary}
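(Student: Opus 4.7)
The plan is to sandwich $\text{TC}(p)$ between $\text{TC}(B)$ from both sides, extracting each inequality from a result proved earlier in the paper and specialized to the identity map $1_B\colon B\to B$.

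First I would observe that $\text{TC}(1_B)=\text{TC}(B)$. This is just unwinding definitions: from the formula $e_p=(1\times p)\circ e_{0,1}$ we see $e_{1_B}=e_{0,1}\colon PB\to B\times B$, and by definition $\text{TC}(1_B)=sec\hspace{.1mm}(e_{0,1})=\text{TC}(B)$.

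Next, since $p$ is a fibration, I would apply the immediately preceding proposition with $p'=1_B\colon B\to B$ playing the role of the outgoing map. That proposition gives $\text{TC}(p'p)\leq \text{TC}(p')$, which in this case reads $\text{TC}(p)\leq \text{TC}(1_B)=\text{TC}(B)$. For the reverse inequality, I would invoke Proposition~\ref{section-ineq}(a), which applies because $p$ admits a section by hypothesis; taking the outgoing map $p''=1_B$ in the chain $E\stackrel{p}{\to} B\stackrel{1_B}{\to} B$ gives $\text{TC}(1_B)\leq \text{TC}(1_B\circ p)=\text{TC}(p)$, i.e.\ $\text{TC}(B)\leq \text{TC}(p)$. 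Combining the two inequalities yields $\text{TC}(p)=\text{TC}(B)$.

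The "in particular" assertion is then immediate from the standard fact, recalled earlier in the paper, that $\text{TC}(B)=1$ if and only if $B$ is contractible: $\text{TC}(p)=1 \Longleftrightarrow \text{TC}(B)=1 \Longleftrightarrow B\simeq\star$.

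No real obstacle is anticipated, since the entire argument is a double application of results already proved; the only care needed is to correctly identify which map plays the role of $p'$ or $p''$ in each invocation, and to verify at the outset that the definition of topological complexity of a map specializes to the usual $\text{TC}$ on the identity map.
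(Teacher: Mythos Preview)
Your proposal is correct and matches the paper's intended argument: the corollary is stated without explicit proof because it follows immediately by combining the ``in particular'' clauses of Proposition~\ref{section-ineq} (a section gives $\text{TC}(B)\leq\text{TC}(p)$) and the immediately preceding proposition (a fibration gives $\text{TC}(p)\leq\text{TC}(B)$). Your unpacking of these specializations via $p'=p''=1_B$ and the observation $\text{TC}(1_B)=\text{TC}(B)$ simply makes explicit what the paper leaves to the reader.
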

 

\section{The $(k,r)$ robot motion planning problem}\label{kr-robot}

In this section we use the results above within a particular problem in robotics.

Recall that, in general terms, the \textit{configuration space} or \textit{state space} of a system $\mathcal{S}$ is defined as the space of all possible states of $\mathcal{S}$ (see \cite{latombe2012robot} or \cite{lavalle2006planning}). Investigation of the problem of simultaneous collision-free motion planning for a multi-robot system consisting of $k$ distinguishable robots, each with state space $X$, leads us to study the ordered configuration space $F(X,k)$ of $k$ distinct points on $X$.  \red{Recall the definition of \blue{the} ordered configuration \blue{space}} $F(X,k)$ in Subsection~\ref{secconfespa}. Note that the \red{$i$-th} coordinate of a point $(x_1,\ldots,x_n)\in F(X,k)$ represents the configuration of the \red{$i$-th} moving object, so that the condition $x_i\neq x_j$ reflects the collision-free requirement.

\textit{The $(k,r)$ robot motion planning problem} consists in controlling simultaneously these $k$ robots without collisions, where one is interested in the initial positions of the $k$ robots and \textit{only interested in the final position of the first $r$ robots ($k\geq r$)} (see Figure \ref{fig1}).
\begin{figure}[htb]
    \centering
\begin{tikzpicture}
\filldraw [gray] (-2,5) circle (0pt) node[anchor=west] {$X$};
\filldraw [gray] (0,0) circle (2pt) node[anchor=west] {$a_1$} node[anchor=east] {$(1)$};
\filldraw [gray] (1,3) circle (2pt) node[anchor=west] {$a_2$} node[anchor=east] {$(2)$};
\filldraw [gray] (2,4) circle (2pt)  node[anchor=west] {$b_1$} node[anchor=east] {$(1)$};
 \end{tikzpicture}
    \caption{The $(2,1)$ robot motion planning problem: we need to move Robots $1$ and $2$, simultaneously and avoiding collisions, from the initial positions $(a_1,a_2)$ to a final position $b_1$ of Robot $1$. We are only interested in the final position of the first robot.}
    \label{fig1}
\end{figure}

\textit{An algorithm} for the $(k,r)$ robot motion planning problem is a function which assigns to any pair of configurations $(A,B)\in F(X,k)\times F(X,r)$ consisting of an initial state $A=(a_1,\ldots,a_k)\in F(X,k)$ and a desired state $B=(b_1,\ldots,b_r)\in F(X,r)$, a continuous motion of the system starting at the initial state $A$ and ending at the desired state $B$ (see Figure \ref{fig2}). 
\begin{figure}[htb]
    \centering
\begin{tikzpicture}
\filldraw [gray] (-2,5) circle (0pt) node[anchor=west] {$X$};
\filldraw [gray] (5,-1) circle (2pt) node[anchor=east] {$(2)$};
\filldraw [gray] (2,4) circle (2pt) node[anchor=west] {$b_1$} node[anchor=east] {$(1)$};
\draw (0,0) .. controls (1,1) and (-2,3) .. (2,4);
\draw (1,3) .. controls (4,1) .. (5,-1);
 \end{tikzpicture}
 \caption{An algorithm for the $(2,1)$ robot motion planning problem}
    \label{fig2}
\end{figure}

The central problem of modern robotics, \textit{the motion planning problem}, consists of finding a motion planning algorithm.

We note that an algorithm to the $(k,r)$ robot motion planning problem is a (not necessarily continuous) section $s:F(X,k)\times F(X,r)\to PF(X,k)$ of the map $$e_{\pi_{k,r}^X}:PF(X,k)\to F(X,k)\times F(X,r),~e_{\pi_{k,r}^X}(\alpha)=(\alpha(0),\pi_{k,r}^X\alpha(1)),$$ where $\pi_{k,r}^X:F(X,k)\to F(X,r)$ is the projection of the first $r$ coordinates.

A motion planning algorithm $s$ is called \textit{continuous} if and only if $s$ is continuous. Absence of continuity will result in instability of the behavior of the motion planning. In general, there is not a global continuous motion planning algorithm, and only local continuous motion plans may be found. This fact gives, in a natural way, the use of the numerical invariant TC$(\pi_{k,r}^X)$. Recall that TC$(\pi_{k,r}^X)$ is the minimal number of \textit{continuous} local motion plans to $e_{\pi_{k,r}^X}$ (i.e., continuous local sections for $e_{\pi_{k,r}^X}$), which are needed to construct an algorithm for autonomous motion planning of the $(k,r)$ robot motion planning problem. Any motion  planning  algorithm $s:=\{s_i:U_i\to PE\}_{i=1}^{n}$ is called \textit{optimal} if $n=\text{TC}(\pi_{k,r}^X)$.

\begin{theorem}
Let $M$ be a connected topological manifold without boundary of dimension at least $2$, and let $\pi_{k,r}^X:F(M,k)\to F(M,r)$ be the Fadell-Neuwirth fibration. 
\begin{enumerate}
    \item If $M$ does not have the FPP, then $\text{TC}(\pi_{2,1}^M)=\text{TC}(M).$ Hence the complexity \red{of} the $(2,1)$ robot motion planning problem is the same \red{as} the complexity \red{of} the manifold $M$. More general, if $sec\hspace{.1mm}(\pi_{k,r}^M)=1$, then $\text{TC}(\pi_{k,r}^M)=\text{TC}(F(M,r)).$
    \item If $M$ has the FPP, then $\max\{2,\text{cat}(M)\}\leq \text{TC}(\pi_{k,1}^M)\leq\text{TC}(M)$, for any $k\geq 2$. In particular, $M$ is not contractible.
\end{enumerate}
\end{theorem}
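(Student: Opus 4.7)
The plan is to assemble the theorem from the ingredients already established in the paper, using the FPP characterization (Theorem~\ref{characterizacao-ppf}) as the bridge between the hypotheses (FPP or not) and sectional information about $\pi_{2,1}^M$, and then feeding that into the TC inequalities proved in Section~\ref{tc-map}.

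For item $(1)$, my first step would be to invoke Theorem~\ref{characterizacao-ppf}: since $M$ does not have the FPP, $sec\hspace{.1mm}(\pi_{2,1}^M)=1$, that is, the Fadell-Neuwirth fibration $\pi_{2,1}^M$ admits a (global) continuous section. Lemma~\ref{TFN} ensures we really are dealing with a fibration (here the hypothesis that $M$ is a manifold without boundary of dimension at least $2$ is essential). Now I would apply the corollary at the end of Section~\ref{tc-map} stating that a fibration admitting a section $p\colon E\to B$ satisfies $\text{TC}(p)=\text{TC}(B)$; this yields $\text{TC}(\pi_{2,1}^M)=\text{TC}(M)$ directly. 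For the more general claim, the argument is identical: $sec\hspace{.1mm}(\pi_{k,r}^M)=1$ gives a section of the fibration $\pi_{k,r}^M\colon F(M,k)\to F(M,r)$, hence $\text{TC}(\pi_{k,r}^M)=\text{TC}(F(M,r))$.

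For item $(2)$, the lower bound $\max\{2,\text{cat}(M)\}\leq \text{TC}(\pi_{k,1}^M)$ is precisely item $(1)$ of Theorem~\ref{tc-implies-fpp} (applied with $X=M$), so this step is immediate. For the upper bound, I would use that $\pi_{k,1}^M$ is a fibration (Lemma~\ref{TFN}) and apply the proposition proved via the pullback of Lemma~\ref{general-pullback}, which gives $\text{TC}(p)\leq \text{TC}(B)$ for any fibration $p\colon E\to B$, specialized to $p=\pi_{k,1}^M$ with $B=M$. Stringing the two bounds together gives the required chain $\max\{2,\text{cat}(M)\}\leq \text{TC}(\pi_{k,1}^M)\leq\text{TC}(M)$.

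For the final clause, I would argue by contradiction: if $M$ were contractible then $\text{TC}(M)=1$, which combined with the already established inequality $\text{TC}(\pi_{k,1}^M)\leq \text{TC}(M)$ forces $\text{TC}(\pi_{k,1}^M)\leq 1$, contradicting the lower bound $\text{TC}(\pi_{k,1}^M)\geq 2$ coming from the FPP. (Equivalently, one could note that $\text{cat}(M)\geq 2$ is already implicit in the lower bound.) I do not anticipate a serious obstacle here, since the whole statement is essentially a packaging of Theorem~\ref{characterizacao-ppf}, Theorem~\ref{tc-implies-fpp}, and the fibration-based TC inequality; the only point requiring attention is making sure the Fadell-Neuwirth map really is a fibration in each invocation, which is guaranteed by the standing hypothesis that $M$ is a boundaryless manifold of dimension at least two.
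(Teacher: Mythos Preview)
Your proposal is correct and is exactly the intended argument: the paper states this theorem in Section~\ref{kr-robot} without an explicit proof, precisely because it is a direct packaging of Theorem~\ref{characterizacao-ppf}, Theorem~\ref{tc-implies-fpp}, Lemma~\ref{TFN}, and the fibration-based TC inequalities (the corollary $\text{TC}(p)=\text{TC}(B)$ for a fibration with section, and $\text{TC}(p)\leq\text{TC}(B)$ for any fibration). You have identified and invoked each ingredient in the right place, including the caveat that Lemma~\ref{TFN} requires the boundaryless hypothesis.
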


\begin{example}
We recall that the $n$-dimensional sphere $S^n$ does not have the FPP. Then, $$\text{TC}(\pi_{2,1}^{S^n})=\text{TC}(S^n)=\begin{cases}
    2, & \hbox{for $n$ odd;} \\
    3, & \hbox{for $n$ even.}
\end{cases}
    $$
    Furthermore, we have that any contractible topological manifold $M$ without boundary does not have the FPP. Hence, $\text{TC}(\pi_{2,1}^M)=\text{TC}(M)=1$. 
\end{example}

\begin{example}
\begin{itemize}   
    \item The odd-dimensional projective spaces $\mathbb{RP}^m$ \red{do} not have the FPP, then $\text{TC}(\pi_{2,1}^{\mathbb{RP}^m})=\text{TC}(\mathbb{RP}^m)$. By \cite{farber2003topologicalproject}, the topological complexity $\text{TC}(\mathbb{RP}^m)$ for any $m\neq 1,3,7$, coincides with the smallest integer $k$ such that the projective space $\mathbb{RP}^m$ admits an immersion into $\mathbb{R}^{k-1}$.
    \item It is known that any closed surface $\Sigma$ except the projective plane $\Sigma\neq\mathbb{RP}^2$, does not have the FPP. Thus, $\text{TC}(\pi_{2,1}^{\Sigma})=\text{TC}(\Sigma).$ 
    \item We have that the projective plane $\mathbb{RP}^2$ has the FPP. Furthermore, it is well known $\text{cat}(\mathbb{RP}^2)=3$ and  $\text{TC}(\mathbb{RP}^2)=4$ \cite{farber2003topologicalproject}. Then, $3=\text{cat}(\mathbb{RP}^2)\leq\text{TC}(\pi_{k,1}^{\mathbb{RP}^2})\leq \text{TC}(\mathbb{RP}^2)=4$, for $k\geq 2$.
    \item For any connected compact Lie group, the Fadell-Neuwirth fibration $$\pi_{k,k-1}^{G\times \mathbb{R}^m}:F(G\times \mathbb{R}^m,k)\to F(G\times \mathbb{R}^m,k-1)$$ admits a continuous section (for $m\geq 2$). Then $\text{TC}(\pi_{k,k-1}^{G\times \mathbb{R}^m})=\text{TC}(F(G\times \mathbb{R}^m,k-1))$. By \cite{zapata2019cat}, the topological complexity $\text{TC}(F(G\times \mathbb{R}^m,2))=2\text{TC}(G)$. Hence, $\text{TC}(\pi_{3,2}^{G\times \mathbb{R}^m})=2\text{TC}(G)=2\text{cat}(G)$.
    \item Any connected Lie group has not the FPP and $\text{cat}(G)=\text{TC}(G)$. Then, $\text{TC}(\pi_{2,1}^G)=\text{TC}(G)=\text{cat}(G)$. In general, $\text{TC}(\pi_{k,1}^G)=\text{TC}(G)=\text{cat}(G)$ for any $k\geq 2$.
\end{itemize}
\end{example}

\begin{example}
\begin{itemize}
    \item We have $sec\hspace{.1mm}(\pi_{k,r}^{S^d})=cat(F(S^d,r))=2$, for $k\geq 3$, $d$ even, and $r=1,2$. Then $2=sec\hspace{.1mm}(\pi_{k,r}^{S^d})\leq\text{TC}(\pi_{k,r}^{S^d})\leq \text{TC}(F(S^d,r))=\text{TC}(S^d)=3$.
    \item For any $k\geq 2$ and $d$ odd, and $r=1,2$. We have $sec\hspace{.1mm}(\pi_{k,r}^{S^d})=1.$ Hence, $\text{TC}(\pi_{k,r}^{S^d})=\text{TC}(F(S^d,r))=\text{TC}(S^d)=2$.
\end{itemize}
\end{example}

\begin{proposition}\cite{pavesic2019}
Let $p:E\to B$ be a fibration between ANR spaces. Then \[\text{cat}(B)\leq\text{TC}(p)\leq\min\{ \text{cat}(E)+\text{cat}(E)sec\hspace{.1mm}(p)-1, \text{TC}(B),\text{cat}(E\times B)\}.\] In particular, $\text{TC}(p)=1$ if and only if $B$ is contractible.
\end{proposition}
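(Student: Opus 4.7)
The plan is to establish four separate bounds and then read off the characterization. The lower bound $\text{cat}(B)\le\text{TC}(p)$ and the upper bound $\text{TC}(p)\le\text{TC}(B)$ are already proved in the two propositions stated immediately above (the former as part of $\text{TC}(p)\ge\max\{\text{cat}(B),sec\hspace{.1mm}(p)\}$, the latter via Lemma~\ref{general-pullback} applied with $p''=1_B$, which reduces $e_p$ to a pullback of $e_{1_B}$). I would simply invoke these.

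For $\text{TC}(p)\le\text{cat}(E\times B)$, I would observe that $e_p\colon PE\to E\times B$ factors as $(1_E\times p)\circ e_{0,1}$, a composition of two fibrations (since $p$ is a fibration, so is $1_E\times p$). Hence $e_p$ is itself a fibration, and Lemma~\ref{prop-secat-map}(1) yields $\text{TC}(p)=sec\hspace{.1mm}(e_p)\le\text{cat}(E\times B)$.

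The remaining estimate $\text{TC}(p)\le\text{cat}(E)+\text{cat}(E)\,sec\hspace{.1mm}(p)-1$ follows from the previous bound by combining the product inequality $\text{cat}(E\times B)\le\text{cat}(E)+\text{cat}(B)-1$, valid in the ANR setting, with the auxiliary estimate $\text{cat}(B)\le\text{cat}(E)\cdot sec\hspace{.1mm}(p)$. For the latter, pick an open cover $\{V_j\}_{j=1}^{sec(p)}$ of $B$ carrying continuous local sections $\sigma_j\colon V_j\to E$, and an open categorical cover $\{W_i\}_{i=1}^{\text{cat}(E)}$ of $E$ with contracting homotopies $H_i\colon W_i\times I\to E$ (starting at the inclusion and ending at a constant). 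Then the open sets $\sigma_j^{-1}(W_i)$ cover $B$ (given $b\in B$, take any $j$ with $b\in V_j$ and any $i$ with $\sigma_j(b)\in W_i$), and each is contractible in $B$ via the composite homotopy $(b,t)\mapsto p(H_i(\sigma_j(b),t))$, which equals the inclusion at $t=0$ and is constant at $t=1$. This yields $\text{cat}(B)\le\text{cat}(E)\cdot sec\hspace{.1mm}(p)$.

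The characterization $\text{TC}(p)=1\iff B$ contractible is then immediate: if $B$ is contractible then $\text{cat}(B)=\text{TC}(B)=1$, forcing $\text{TC}(p)=1$ from the upper bound; conversely, $\text{TC}(p)=1$ together with the lower bound gives $\text{cat}(B)=1$, whence $B$ is contractible. I expect the main technical point to be the auxiliary inequality $\text{cat}(B)\le\text{cat}(E)\cdot sec\hspace{.1mm}(p)$ together with a clean use of the ANR hypothesis for the cat-product bound; everything else is either a direct invocation of earlier results or the factorization of $e_p$ through $e_{0,1}$.
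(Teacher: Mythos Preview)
The paper does not supply a proof of this proposition; it is simply quoted from \cite{pavesic2019} and used as input for the subsequent theorem. So there is no ``paper's own proof'' to compare against.

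Your argument is correct. A couple of remarks on its shape. The four bounds are handled cleanly: the lower bound and $\text{TC}(p)\le\text{TC}(B)$ are indeed already in the paper; your observation that $e_p=(1_E\times p)\circ e_{0,1}$ is a fibration (hence Lemma~\ref{prop-secat-map}(1) applies) gives $\text{TC}(p)\le\text{cat}(E\times B)$; and your auxiliary inequality $\text{cat}(B)\le\text{cat}(E)\cdot sec\hspace{.1mm}(p)$, proved by pulling a categorical cover of $E$ back along local sections, is standard and correctly argued. The ANR hypothesis enters exactly where you place it, to justify the product formula $\text{cat}(E\times B)\le\text{cat}(E)+\text{cat}(B)-1$.

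One thing worth noting is that your route to the bound $\text{cat}(E)+\text{cat}(E)\,sec\hspace{.1mm}(p)-1$ factors through $\text{cat}(E\times B)$, so in your derivation that term is never sharper than $\text{cat}(E\times B)$ and the minimum is effectively determined by the other two entries. This is perfectly fine for establishing the stated inequality, but it does mean you are not recovering any independent content in that first term; Pave\v{s}i\'c's original argument may reach it by a different construction that does not pass through the product estimate.
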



\begin{theorem}
Let $M$ be a connected topological manifold without boundary of dimension at least $2$. 
If $M$ has the FPP, then $$\max\{2,\text{cat}(M)\}\leq\text{TC}(\pi_{2,1}^M)\leq \min\{ 3\text{cat}(F(M,2))-1, \text{TC}(M),\text{cat}(F(M,2)\times M)\}.$$
\end{theorem}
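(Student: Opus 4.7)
The plan is to apply the displayed proposition (just above the statement) together with the characterization of the FPP via sectional number (Theorem~\ref{characterizacao-ppf}) and the lower bound $\text{TC}(p)\geq\max\{\text{cat}(B),sec(p)\}$ established previously.

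First I would check that the hypotheses of the preceding ANR-fibration proposition are satisfied. The space $M$ is a topological manifold without boundary, hence an ANR. The configuration space $F(M,2)$ is an open subset of $M\times M$ (the complement of the diagonal), so it is itself a topological manifold without boundary and therefore also an ANR. By Lemma~\ref{TFN}, since $\dim M\geq 2$, the projection $\pi_{2,1}^M\colon F(M,2)\to M$ is a (locally trivial) fibration. Hence the preceding proposition applies with $p=\pi_{2,1}^M$, $E=F(M,2)$, $B=M$, giving
\[
\text{cat}(M)\leq\text{TC}(\pi_{2,1}^M)\leq\min\bigl\{\text{cat}(F(M,2))+\text{cat}(F(M,2))\cdot sec(\pi_{2,1}^M)-1,\;\text{TC}(M),\;\text{cat}(F(M,2)\times M)\bigr\}.
\]

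Second, I would invoke the FPP hypothesis. By Theorem~\ref{characterizacao-ppf}, the assumption that $M$ has the FPP is equivalent to $sec(\pi_{2,1}^M)=2$. Substituting this value into the first entry of the minimum yields
\[
\text{cat}(F(M,2))+2\,\text{cat}(F(M,2))-1=3\,\text{cat}(F(M,2))-1,
\]
so the upper bound becomes exactly $\min\{3\,\text{cat}(F(M,2))-1,\text{TC}(M),\text{cat}(F(M,2)\times M)\}$, matching the statement.

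Third, for the lower bound I would combine two facts: the inequality $\text{cat}(M)\leq\text{TC}(\pi_{2,1}^M)$ just obtained, and the proposition (earlier in the section) asserting $\text{TC}(p)\geq\max\{\text{cat}(B),sec(p)\}$, which specialized to $p=\pi_{2,1}^M$ gives $\text{TC}(\pi_{2,1}^M)\geq sec(\pi_{2,1}^M)=2$. Together these yield $\max\{2,\text{cat}(M)\}\leq\text{TC}(\pi_{2,1}^M)$, completing the claimed chain of inequalities.

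There is no real obstacle here: the whole argument is a bookkeeping exercise that assembles (i) the ANR-fibration bounds, (ii) the FPP$\Leftrightarrow sec(\pi_{2,1}^M)=2$ characterization, and (iii) the general lower bound for $\text{TC}$ of a map. The only point one must verify carefully is that $F(M,2)$ is an ANR (which is automatic since it is an open submanifold of $M\times M$) so that the ANR hypothesis of the preceding proposition is genuinely met.
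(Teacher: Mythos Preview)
Your proposal is correct and is precisely the argument the paper has in mind: the theorem is stated immediately after the ANR--fibration proposition from \cite{pavesic2019} without a written proof, so it is meant as a direct consequence of that proposition specialized to $p=\pi_{2,1}^M$ with $sec(\pi_{2,1}^M)=2$ (from Theorem~\ref{characterizacao-ppf}), together with the earlier bound $\text{TC}(p)\geq\max\{\text{cat}(B),sec(p)\}$. Your verification that $F(M,2)$ and $M$ are ANRs and that $\pi_{2,1}^M$ is a fibration (Lemma~\ref{TFN}) makes explicit the only points the paper leaves implicit.
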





\bibliographystyle{plain}

\begin{thebibliography}{10}

\bibitem{brooks1970number} R. Brooks, {\it `The number of roots of $f(x)=a$'}, Bulletin of the American Mathematical Society. {\bf 76} 5 (1970), 1050--1052.

\bibitem{brown1999middle} R. Brown, {\it `A middle-distance look at root theory'}, Banach Center Publications, Instytut Matematyczny Polskiej Akademii Nauk. {\bf 49} (1999), 29--41.

\bibitem{cornea2003lusternik} O. Cornea, G. Lupton, J. Oprea and D. Tanr{\'e}, {\it `Lusternik-Schnirelmann Category'}, Mathematical Surveys and Monographs, 103 (American Mathematical Society, Providence, RI, 2003).

\bibitem{fadell1970} E. Fadell, {\it `Recent results in the fixed point theory of continuous maps'}. Bull. Amer. Math. Soc., {\bf 76} (1970), 10--29.

\bibitem{fadell1962configuration} E. Fadell and L. Neuwirth, {\it `Configuration spaces'}. Math. Scand. {\bf 10} (4) (1962), 111--118.

\bibitem{farber2003topological} M. Farber, {\it 'Topological complexity of motion planning'}, Discrete and Computational Geometry. {\bf 29} (2003), no. 2, 211--221.

\bibitem{farber2003topologicalproject} M. Farber, S. Tabachnikov and S. Yuzvinsky, {\it 'Topological robotics: motion planning in projective spaces'}, International Mathematics Research Notices, Hindawi Publishing Corporation, n. 34 (2003),
1853--1870. 

\bibitem{goncalves2005} D. L.~Gonçalves, {\it 'Coincidence theory'}, In Handbook of Topological Fixed Point Theory. Springer, Dordrecht; (2005), 3--42.

\bibitem{hatcheralgebraic} A. Hatcher, {\it 'Algebraic topology'}, (2001).

\bibitem{latombe2012robot}  J.-C. Latombe, {\it `Robot motion planning'} (Springer, New York, 1991). 

\bibitem{lavalle2006planning} S. M. LaValle, {\it `Planning algorithms'}  (Cambridge University Press, Cambridge, 2006).

\bibitem{GLO} M. Grant, G.Lupton and J. Oprea, {\it `Spaces of topological complexity one'} Homology, Homotopy and Applications {\bf 15}(2013) 73--81

\bibitem{pavesic} P. Pavesic, {\it `A Topologist’s View of Kinematic Maps and Manipulation Complexity'}, Contemp. Math. {\bf 702}(2018), 61--83.

\bibitem{pavesic2019} P. Pavesic, {\it 'Topological complexity of a map'}. arXiv preprint arXiv:1809.09021 (2019).

\bibitem{schwarz1958genus} A. S. Schwarz, {\it 'The genus of a fiber space'}. Dokl. Akad. Nauk SSSR (NS). {\bf 119} (1958), 219--222.

\bibitem{zapata2017non} C. A. I. Zapata, {\it 'Non-contractible configuration spaces'},  MORFISMOS, v. 22, p. 27--39, 2018. arXiv preprint arXiv:1709.00570 (2017). 

\bibitem{zapata2019cat} C. A. I. Zapata, {\it 'Category and Topological Complexity of the configuration space $F(G\times \mathbb{R}^m,2)$'},  Bulletin of the Australian Mathematical Society, v. 100, n 3, p. 507--517, 2019. arXiv preprint arXiv:arXiv:1711.01718. 

\end{thebibliography}

\end{document}